\title{On the Euler characteristic of \(S\)-arithmetic groups}
 \author[H. Kammeyer]{Holger Kammeyer}
 \author[G. Serafini]{Giada Serafini}
 \address{Heinrich Heine University D{\"u}sseldorf, Faculty of Mathematics and Natural Sciences, Mathematical Institute, Germany}
 \email{holger.kammeyer@hhu.de}
 \email{giada.serafini@hhu.de}
\subjclass[2010]{22E40, 20E18, 11E72}
\keywords{S-arithmetic groups, Euler characteristic, Tate duality}
\theoremstyle{plain}
\newtheorem{theorem}[equation]{Theorem}
\newtheorem{corollary}[equation]{Corollary}
\newtheorem{proposition}[equation]{Proposition}
\theoremstyle{definition}
\newtheorem*{definition*}{Definition}
\newtheorem*{observation*}{Observation}
\providecommand{\ignore}[1]{}
\providecommand{\R}{\mathbb{R}}
\providecommand{\Q}{\mathbb{Q}}
\providecommand{\Z}{\mathbb{Z}}
\providecommand{\C}{\mathbb{C}}
\DeclareMathOperator{\Aut}{Aut}
\DeclareMathOperator{\Ad}{Ad}
\DeclareMathOperator{\Br}{Br}
\newcommand*{\arXiv}[1]{ \href{http://www.arxiv.org/abs/#1}{arXiv:\textbf{#1}}}
\begin{document}

\begin{abstract}
  We show that the sign of the Euler characteristic of an \(S\)-arithmetic subgroup of a simple algebraic group depends on the \(S\)-congruence completion only, except possibly in type \({}^6 D_4\).  Consequently, the sign is a profinite invariant for such \(S\)-arithmetic groups with the congruence subgroup property.  This generalizes previous work of the first author with Kionke--Raimbault--Sauer.
\end{abstract}

\maketitle

\section{Introduction}

The purpose of this article is to prove a stability result for the sign \(\operatorname{sgn} \chi(\Gamma)\) of the Euler characteristic of \(S\)-arithmetic groups, where \(\operatorname{sgn}(a)\) is defined to be \(-1\), \(0\), \(1\) if \(a < 0\), \(a=0\), \(a > 0\), respectively.

\begin{theorem} \label{thm:main-theorem}
  For \(i=1,2\), let \(k_i\) be number fields and let \(S_i\) be finite sets of places of \(k_i\) containing the infinite ones.  Let \(\Gamma_i \le \mathbf{G_i}\) be \mbox{\(S_i\)-arithmetic} subgroups of simply-connected simple non-triality \mbox{\(k_i\)-groups} with commensurable congruence completions.  Then \(\operatorname{sgn} \chi (\Gamma_1) = \operatorname{sgn} \chi (\Gamma_2)\).
\end{theorem}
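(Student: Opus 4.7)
The plan follows and extends the strategy used in the previous arithmetic case: factor $\chi(\Gamma)$ into local contributions via Harder's Euler--Poincar\'e formula and argue that the sign of each local factor is already visible in the $S$-congruence completion. For a simply-connected simple $k$-group $\mathbf{G}$ and an $S$-arithmetic $\Gamma \leq \mathbf{G}$, Harder's formula factorises
\[
\chi(\Gamma) \;=\; \chi_\infty(\mathbf{G}) \cdot \prod_{v \in S_f} \chi_v(\mathbf{G},\Gamma),
\]
where the finite-place contributions $\chi_v$ come from Euler--Poincar\'e measures on Bruhat--Tits buildings and are nonnegative, and $\chi_\infty(\mathbf{G})$ vanishes unless the fundamental rank $\delta_\infty(\mathbf{G})$ is zero, in which case its sign is a function of archimedean invariants of $\mathbf{G}$ (essentially the compact dual of the symmetric space at infinity). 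Hence $\operatorname{sgn}\chi(\Gamma)$ is governed by (i) the archimedean isomorphism type of $\mathbf{G}$ and (ii) the nonvanishing of each $\chi_v$ for $v \in S_f$.

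Next I would unpack the hypothesis. The $S$-congruence completion of $\Gamma_i$ is essentially $\prod'_{v \notin S} \mathbf{G}_i(\mathcal{O}_{k_v})$, so commensurability forces local isomorphisms $\mathbf{G}_1(k_v) \cong \mathbf{G}_2(k_v)$ for almost all $v \notin S$. Local rigidity pins down the absolute type of $\mathbf{G}_i$ and the $k_v$-forms at every $v \notin S$. To transfer this to the places in $S$, I would invoke the Hasse principle of Kneser--Harder--Chernousov for simply-connected simple groups, giving $H^1(k, \mathbf{G}) \hookrightarrow \prod_{v \in V_\infty} H^1(k_v, \mathbf{G})$, together with Tate duality to control the residual ambiguity at the archimedean places. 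The local factors $\chi_v$ at $v \in S_f$ depend only on $\mathbf{G}(k_v)$ and the conjugacy class of parahoric, which are likewise determined by the $S$-congruence data.

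The main obstacle will be the final parity step: showing that no admissible Tate-dual twist preserving the finite-place data can flip either the sign of $\chi_\infty$ or the nonvanishing of some $\chi_v$ with $v \in S_f$. I expect this to reduce to a case-by-case verification across the absolute types of simply-connected simple $k$-groups, combining the classification of real forms (Borel--de Siebenthal) with explicit Tate-dual computations on the center to exclude any twist that would change the sign. Once this is done, the product formula yields $\operatorname{sgn}\chi(\Gamma_1) = \operatorname{sgn}\chi(\Gamma_2)$ directly.
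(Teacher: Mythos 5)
Your broad outline is on the right track: localize the sign of $\chi(\Gamma)$ into place-by-place contributions over $S$, use the commensurability of $S$-congruence completions (via superrigidity) to pin down the local forms outside $S$, invoke a global-to-local principle to constrain the possible forms at places in $S$, and then go case-by-case over types. However, there is one outright error and one significant misdirection.

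The error is the claim that the finite-place contributions $\chi_v$ are nonnegative. This is false, and if it were true the theorem would be far easier. The Euler--Poincar\'e measure of Serre at a non-archimedean place $v$ has sign $(-1)^{\operatorname{rank}_{k_v}\mathbf{G}}$, so each $v \in S$ with $v \nmid \infty$ can contribute a sign $\pm 1$ depending on whether the $k_v$-rank is even or odd. This is precisely why the sign is governed by the mod-$2$ invariant
\[
d(\mathbf{G_i}) = \sum_{v \textup{ real}} \tfrac{\dim X^v_i}{2} + \sum_{v \in S,\, v \nmid \infty} \operatorname{rank}_{k_v}\mathbf{G_i},
\]
and why the finite places in $S$ cannot simply be ignored. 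Controlling the interplay between these ranks and the archimedean contributions across the (generally different) local forms at $v \in S$ is the actual content of the proof. Your nonnegativity assumption collapses this to a statement about archimedean data only, which the $S$-arithmetic setting does not permit.

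The misdirection concerns the global-to-local tool. The Hasse principle of Kneser--Harder--Chernousov gives a bijection $H^1(k, \mathbf{G}) \to \prod_{v \mid \infty} H^1(k_v, \mathbf{G})$ for simply-connected $\mathbf{G}$, but this classifies torsors under $\mathbf{G}$, not $k$-forms of $\mathbf{G}$; forms are classified by $H^1(k, \Aut \mathbf{G})$, and the inner twists of a fixed quasi-split $\mathbf{G_0}$ by $H^1(k, \Ad \mathbf{G_0})$. To make progress one must first establish that $\mathbf{G_1}$ and $\mathbf{G_2}$ are inner $k$-twists of the same quasi-split $\mathbf{G_0}$ (so one can compare them in a common cohomology set), then push to $H^2(k, Z(\mathbf{G_0}))$ via the coboundary and use Poitou--Tate duality for the finite Galois module $Z(\mathbf{G_0})$ to describe the image of $H^2(k, Z) \to \bigoplus_v H^2(k_v, Z)$ as the annihilator of the global characters. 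That ``sum-to-zero'' relation among the local invariants, combined with equality outside $S$, is what controls the data at places in $S$. You gesture at Tate-dual computations on the center, which is the right instinct, but the Hasse-principle statement you quote is not the lever that makes the argument go. Without the nonnegativity claim retracted and the Poitou--Tate mechanism made explicit, the case-by-case verification you anticipate cannot be set up.
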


By \emph{non-triality}, we mean that \(\mathbf{G_i}\) is not of type \({}^6 D_4\).  This assumption can be dropped if \(k_i = \Q\).  We will however discuss in Section~\ref{section:full-triality} why this exception might be essential for certain number fields.

Here is some background and motivation for the theorem.  A group invariant \(\alpha\) is called \emph{profinite} among a class of groups \(\mathcal{C}\) if for any two \(\Gamma_1, \Gamma_2 \in \mathcal{C}\) with isomorphic profinite completions \(\widehat{\Gamma_1} \cong \widehat{\Gamma_2}\), we have \mbox{\(\alpha(\Gamma_1) = \alpha(\Gamma_2)\)}.  While it was observed in~\cite{Bridson-Conder-Reid:fuchsian}*{Corollary~3.3} that the first \emph{\(\ell^2\)-Betti number}~\citelist{\cite{Lueck:l2-invariants} \cite{Kammeyer:l2-invariants}}, denoted \(b^{(2)}_1(\Gamma)\), is profinite among finitely presented residually finite groups, it is shown in~\cite{Kammeyer-Sauer:spinor} that no higher \(\ell^2\)-Betti number \(b^{(2)}_n(\Gamma)\) for \(n \ge 2\) is profinite among \(S\)-arithmetic groups.  The examples of~\cite{Kammeyer-Sauer:spinor} leave however the possibility that \(\operatorname{sgn} \chi (\Gamma)\) is determined by \(\widehat{\Gamma}\) for \(S\)-arithmetic \(\Gamma\).  Since \(\chi(\Gamma) = \sum_{n \ge 0} (-1)^n \,b^{(2)}_n(\Gamma)\), this would imply that some information on higher \(\ell^2\)-cohomology is reflected in \(\widehat{\Gamma}\), even though the individual \(\ell^2\)-Betti numbers are not.  While the main result in~\cite{Kammeyer-et-al:profinite-invariants} gave confirmation for arithmetic groups with the \emph{congruence subgroup property} (CSP), we can now handle the case of \(S\)-arithmetic groups with CSP.

\begin{theorem} \label{thm:sign-profinite}
  For \(i=1,2\), let \(k_i\) be number fields and let \(S_i\) be finite sets of places containing the infinite ones.  Suppose \(\Gamma_i \le \mathbf{G_i}\) are \mbox{\(S_i\)-arithmetic} subgroups of simply-connected simple non-triality \mbox{\(k_i\)-groups} with CSP such that \(\widehat{\Gamma_1}\) and \(\widehat{\Gamma_2}\) are commensurable.  Then \(\operatorname{sgn} \chi (\Gamma_1) = \operatorname{sgn} \chi (\Gamma_2)\). 
\end{theorem}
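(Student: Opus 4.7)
The plan is to deduce Theorem \ref{thm:sign-profinite} as a direct corollary of Theorem \ref{thm:main-theorem} by showing that, under the congruence subgroup property, commensurability of the profinite completions forces commensurability of the \(S\)-congruence completions. Recall that CSP for \(\Gamma_i\) is the statement that the congruence kernel \(C_i\), defined as the kernel of the canonical surjection from \(\widehat{\Gamma_i}\) onto the \(S\)-congruence completion of \(\Gamma_i\), is finite.

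First I would verify that \(\widehat{\Gamma_i}\) and its \(S\)-congruence completion are commensurable as profinite groups. Since \(\widehat{\Gamma_i}\) is Hausdorff profinite and \(C_i\) is a finite subset, for each non-identity \(c \in C_i\) one can find an open subgroup of finite index in \(\widehat{\Gamma_i}\) missing \(c\). Intersecting these finitely many subgroups produces an open finite-index \(U_i \le \widehat{\Gamma_i}\) with \(U_i \cap C_i = 1\). The canonical projection then restricts to an isomorphism from \(U_i\) onto an open subgroup of finite index in the \(S\)-congruence completion of \(\Gamma_i\), which is the desired commensurability.

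Next, transitivity of commensurability of profinite groups, combined with the hypothesis that \(\widehat{\Gamma_1}\) and \(\widehat{\Gamma_2}\) are commensurable, yields commensurability of the \(S\)-congruence completions of \(\Gamma_1\) and \(\Gamma_2\). An application of Theorem \ref{thm:main-theorem} then delivers \(\operatorname{sgn}\chi(\Gamma_1) = \operatorname{sgn}\chi(\Gamma_2)\).

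There is no real obstacle in this reduction: Theorem \ref{thm:sign-profinite} is a formal consequence of Theorem \ref{thm:main-theorem}, with CSP playing only the role of a bridge between the profinite completion and the \(S\)-congruence completion. The single point that one should confirm is that the notion of commensurability of profinite groups used in Theorem \ref{thm:main-theorem} is compatible with the elementary manipulation above, i.e.\ that passing to an open finite-index subgroup preserves commensurability; this is immediate for any standard convention. All of the substantive work is therefore concentrated in Theorem \ref{thm:main-theorem}.
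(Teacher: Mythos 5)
Your proposal is correct and follows essentially the same route as the paper: the paper likewise observes (in Section~\ref{section:preliminaries}) that under CSP any finite-index subgroup of \(\widehat{\Gamma_i}\) meeting the congruence kernel trivially maps isomorphically onto a finite-index subgroup of \(\overline{\Gamma_i}\), from which commensurability of the profinite and \(S\)-congruence completions follows, and then Theorem~\ref{thm:sign-profinite} is deduced from Theorem~\ref{thm:main-theorem} via transitivity of commensurability. Your write-up merely spells out the Hausdorff separation step used to produce the subgroup \(U_i\), which the paper leaves implicit.
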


We should emphasize that the absolute value of the Euler characteristic is not profinite among \(S\)-arithmetic groups~\cite{Kammeyer-et-al:profinite-invariants}*{Theorem~1.2}.  In fact, profiniteness of group invariants among \(S\)-arithmetic groups seems to fail more often than not, so positive results are valuable.  In particular, one can find pairs of profinitely commensurable \(S\)-arithmetic groups illustrating that neither group homology (in degree \(\ge 2\)), nor geometric dimension, nor Kazhdan's property~(\(T\))~\cite{Aka:property-t}, nor Serre's property FA~\cite{Cheetham-West-et-al:property-fa}, nor bounded cohomology~\cite{Echtler-Kammeyer:bounded}, nor the Bohr compactification~\cite{Jaikin-Lubotzky:grothendieck-pairs} are profinite.

Theorem~\ref{thm:sign-profinite} gives the generalization that was asked for in~\cite{Kammeyer-et-al:profinite-invariants}*{Section~1.3} where the arithmetic case of Theorem~\ref{thm:sign-profinite} was proven, when \(S_i\) consists of the infinite places only.  The congruence subgroup property, meaning \(\mathbf{G_i}\) has finite \(S_i\)-congruence kernel, implies that the congruence and profinite completions are commensurable, so that Theorem~\ref{thm:sign-profinite} is immediate from Theorem~\ref{thm:main-theorem}.  According to a conjecture of Serre, all higher rank \(S\)-arithmetic groups should have CSP.  The status of this conjecture is advanced~\cite{Prasad-Rapinchuk:developments}: it is known to hold true for all isotropic forms and is currently open only for certain anisotropic forms of type \(A_n\), \(D_4\), and~\(E_6\).

\medskip
Let us give some comments on the methods to prove Theorem~\ref{thm:main-theorem}.  The previous proof of the arithmetic case of Theorem~\ref{thm:sign-profinite} with \(k_i = \Q\) in~\cite{Kammeyer-et-al:profinite-invariants} used that \(\operatorname{sgn} \chi (\Gamma_i)\), if nonzero, can be identified with \(\frac{\dim X_i}{2}\)~mod~\(2\) where \(X_i\) is the symmetric space with isometry group \(\mathbf{G_i}(\R)\).  Equality of the signs was then concluded from a Gauss sum formula, applied to the Killing form, which shows that the signature mod~\(8\) is determined by the \(\Q_p\)-completions of the form only.

The argument, however, does not extend to the \(S\)-arithmetic case in any apparent way because an \(S\)-arithmetic group acts with finite covolume on a product of symmetric spaces and Bruhat--Tits buildings.  This implies that \(\operatorname{sgn} \chi (\Gamma_i)\), if nonzero, is the same as the mod~\(2\) invariant
\[ d(\mathbf{G_i}) = \sum_{v \textup{ real}} \frac{\dim X^v_i}{2} + \sum_{v \in S_i, \,v \nmid \infty} \operatorname{rank}_{{k_i}_v} \mathbf{G_i} \mod 2 \]
where \(X^v_i\) denotes the symmetric space associated with \(\mathbf{G_i}({k_i}_v)\).  So we have to understand the interplay of the local forms of \(\mathbf{G_i}\) at archimedean and non-archimedean places.  To do so, we follow a new approach and rely on \emph{Poitou--Tate duality}, a local-global principle for the Galois cohomology of finite abelian modules, from which we establish a relation of the local \emph{Brauer--Witt invariants} of the groups \(\mathbf{G_1}\) and \(\mathbf{G_2}\) at the places \(v \in S_i\).  The exact form of this relation differs depending on the outer type of \(\mathbf{G_i}\) and we go through the census of quasi-split groups to check that the relation indeed implies \(d(\mathbf{G_1}) = d(\mathbf{G_2})\) except possibly in type \({}^6 D_4\).

The results on Brauer--Witt invariants are also of independent interest as they imply necessary and sufficient conditions under which a family of prescribed local isomorphims types is realized by a global group, see for example Theorem~\ref{thm:ses-a2nm1} below for type \({}^2 A_{2n-1}\).  This makes some of the results in~\cite{Prasad-Rapinchuk:existence} more precise.

As part of the case by case study, we single out the types of \(\mathbf{G_i}\) in which some value of \(\operatorname{sgn} \chi(\Gamma_i)\) can be excluded right away.  We gather this information in the following theorem, as we could not find this being worked out in the literature.

\begin{theorem} \label{thm:signs}
 \(S\)-arithmetic subgroups of simple \(k\)-groups of type 
 \begin{enumerate}[label=(\roman*)]
   \setlength\itemsep{2mm}
    \item  \( {}^1 A_n\ (n \ge 2), {}^1 D_{2n+1}\ (n \ge 2), {}^1 E_6 \) \\[1mm]
have zero Euler characteristic,
\item
  \( {}^2 A_{4n}\ (n \ge 1), C_{4n}\ (n \ge 1), {}^1 D_{4n}\ (n \ge 1), {}^3 D_4, {}^2 E_6, E_8, F_4, G_2  \) \\[1mm]
  have either zero or positive Euler characteristic,
\item \label{item:allsigns} \( A_1, {}^2 A_n\ (4 \nmid n \ge 2), B_n\ (n \ge 2), C_n\ (4 \nmid n \ge 3),\) \\
  \( {}^1 D_{4n+2}\ (n \ge 1) , {}^2 D_n \ (n \ge 4), {}^6 D_4, E_7 \) \\[1mm]
  can have zero, positive, or negative Euler characteristic.
  \end{enumerate}
\end{theorem}

So the core value of Theorem~\ref{thm:main-theorem} lies in case~\ref{item:allsigns} of the theorem.

\medskip
Let us point out that in light of \emph{Margulis arithmeticity}, we obtain a version of Theorem~\ref{thm:sign-profinite} in more geometric terms.  By a \emph{higher rank Lie group} \(G\) we refer to the type of generalized Lie groups to which the arithmeticity theorem applies.  This means \(G = \prod_{\alpha \in A} \mathbf{G}_\alpha (k_\alpha)\) where \(A\) is a finite and non-empty set, where \(k_\alpha\) for \(\alpha \in A\) is a local field of characteristic zero (so \(k_\alpha\) is either \(\R\), \(\C\), or a finite degree extension of \(\Q_p\)), where \(\mathbf{G}_\alpha\) is a connected and simply-connected semisimple \(k_\alpha\)-group without \(k_\alpha\)-anisotropic factors, and where \(\sum_{\alpha \in A} \operatorname{rank}_{k_\alpha} \mathbf{G}_\alpha \ge 2\).  For such \(G\), we can define that a lattice \(\Gamma \le G\) is \emph{irreducible} if no finite index subgroup of \(\Gamma\) is a direct product of two infinite subgroups.  The Margulis arithmeticity theorem says that such \(\Gamma \le G\) determines uniquely a number field \(k\), a connected and simply-connected absolutely almost simple \(k\)-group \(\mathbf{G}\), and a finite set of places \(S\) of \(k\) containing the infinite ones, such that \(\prod_{v \in S} \mathbf{G}(k_v)\) surjects onto \(G\) with compact kernel and such that every \(S\)-arithmetic subgroup of \(\mathbf{G}\) is commensurable with~\(\Gamma\).  We say that \(\Gamma \le G\) \emph{has CSP} if \(\mathbf{G}\) has CSP with respect to \(S\).  With these explanations, Theorem~\ref{thm:sign-profinite} gives the following consequence.

\begin{theorem}
For \(i=1,2\), let \(\Gamma_i \le G_i\) be an irreducible lattice with CSP in a higher rank Lie group with no factor of type \(D_4\).  Suppose \(\widehat{\Gamma_1}\) is commensurable with \(\widehat{\Gamma_2}\).  Then \(\operatorname{sgn} \chi(\Gamma_1) = \operatorname{sgn} \chi(\Gamma_2)\).
\end{theorem}

\medskip
The outline of this article is as follows.  In Section~\ref{section:preliminaries}, we collect some preliminaries.  In particular, we recall the definition of the \(S\)-congruence completion, we explain how Theorem~\ref{thm:main-theorem} implies Theorem~\ref{thm:sign-profinite}, and we discuss why the invariant \(d(\mathbf{G_i})\) gives essentially the sign of the Euler characteristic.  In Section~\ref{section:brauer}, we present the Galois cohomological methods of the proof of Theorem~\ref{thm:main-theorem}, establishing that ``the sum'' of local Brauer--Witt invariants of \(\mathbf{G_1}\) and \(\mathbf{G_2}\) over the places in \(S_i\) must agree ``mod~\(2\)''.  In Section~\ref{section:adelic}, we discuss some consequences of this principle which hold for the groups \(\mathbf{G_1}\) and \(\mathbf{G_2}\) in general.  Finally, Section~\ref{section:cartan-types} completes the proof of Theorem~\ref{thm:main-theorem} by verifying that in all relevant inner and outer Cartan--Killing types our assumptions give \(d(\mathbf{G_1}) = d(\mathbf{G_2})\).  We also point out in which types the Euler characteristic is always zero and in which types we exclusively have \(d(\mathbf{G_i}) = 0\) from which Theorem~\ref{thm:signs} follows.

\medskip
Both authors are grateful for financial support from the Research Training Group ``Algebro-Geometric Methods in Algebra, Arithmetic, and Topology'' (DFG 284078965).  H.\,K.\,acknowledges additional funding from the Priority Program ``Geometry at Infinity'' (DFG 441848266).  We wish to thank our colleagues A.\,Baumann, D.\,Echtler, I.\,Halupczok, B.\,Klopsch, and G. Mantilla-Soler for helpful discussions.

\section{Preliminaries} \label{section:preliminaries}

Let \(k\) be a number field, meaning a finite degree extension of the rational numbers \(\Q\).  Consider a finite set \(S\) of places of \(k\), where a \emph{place} is an equivalence class of absolute values on \(k\), and we agree that \(S\) should always contain all archimedean (real and complex) places.  The set \(S\) defines the \emph{ring of \(S\)-integers} \(\mathcal{O}_{k,S} \subset k\) consisting of all elements \(x \in k\) which have absolute value \(\le 1\) at places outside \(S\).  In the special case that \(S\) consists of the archimedean places only, \(\mathcal{O}_{k,S}\) is the ordinary ring of integers \(\mathcal{O}_k\) of \(k\).

\medskip
Let \(\mathbf{G}\) be a simply-connected, absolutely almost simple, linear algebraic \(k\)-group.  The group of \(k\)-rational points \(\mathbf{G}(k)\) comes with two natural Hausdorff topologies: The \emph{arithmetic topology} has all finite index subgroups as a unit neighborhood base.  The \emph{\(S\)-congruence topology} has all \emph{principal \(S\)-congruence subgroups} as unit neighborhood base.  Principal \(S\)-congruence subgroups are of the form \(\ker (\mathbf{G}(\mathcal{O}_{k,S}) \rightarrow \mathbf{G}(\mathcal{O}_{k,S} / \mathfrak{a}))\) for a nonzero ideal \(\mathfrak{a} \trianglelefteq \mathcal{O}_{k,S}\).  Here, we have fixed an embedding \(\mathbf{G} \subset \mathbf{GL_n}\) of which the \(S\)-congruence topology is independent.  As topological groups have a natural uniform structure, we obtain completions \(\widehat{\mathbf{G}}\) and \(\overline{\mathbf{G}}\) of \(\mathbf{G}(k)\) with respect to the arithmetic and \(S\)-congruence topology, respectively. Since the arithmetic topology is a priori finer than the \(S\)-congruence topology, we have a canonical map \(\widehat{\mathbf{G}} \rightarrow \overline{\mathbf{G}}\) whose kernel \(C(\mathbf{G},S)\) is called the \emph{\(S\)-congruence kernel} of \(\mathbf{G}\) with respect to \(S\).  We say that \(\mathbf{G}\) has the \emph{congruence subgroup property} or simply \emph{CSP} with respect to \(S\) if \(C(\mathbf{G}, S)\) is finite.  The congruence kernel \(C(\mathbf{G},S)\) also lies in the short exact sequence
  \[ 1 \longrightarrow C(\mathbf{G},S) \longrightarrow \widehat{\mathbf{G}(\mathcal{O}_{k,S})} \longrightarrow \overline{\mathbf{G}(\mathcal{O}_{k,S})} \longrightarrow 1 \]
  where the completions of the last two groups are defined as before.  We see that any finite index subgroup of \(\widehat{\mathbf{G}(\mathcal{O}_{k,S})}\) which intersects \(C(\mathbf{G},S)\) trivially is mapped isomorphically onto a finite index subgroup of \(\overline{\mathbf{G}(\mathcal{O}_{k,S})}\), so CSP implies that the profinite and \(S\)-congruence completions of \(\mathbf{G}(\mathcal{O}_{k,S})\) are commensurable.  Similarly, CSP gives the commensurability of profinite and \(S\)-congruence completions for every \emph{\(S\)-arithmetic group}, meaning any group \(\Gamma \subset \mathbf{G}(k)\) which is commensurable with \(\mathbf{G}(\mathcal{O}_{k,S})\) (this condition again being independent of the chosen embedding \(\mathbf{G} \subset \mathbf{GL_n}\)).  This shows that Theorem~\ref{thm:main-theorem} implies Theorem~\ref{thm:sign-profinite}.

    \medskip
  Any \(S\)-arithmetic subgroup \(\Gamma\) of \(\mathbf{G}\) is a lattice in the locally compact group given by the product
  \[ G = \prod_{v \in S} \mathbf{G}({k}_v). \]
Such a group is known to carry an \emph{Euler--Poincar\'e measure} \(\mu_G\).  This measure satisfies \(\chi(\Gamma) = \mu_G(G/\Gamma)\) for all \(S\)-arithmetic subgroups \(\Gamma \subset \mathbf{G}\) as explained in \cite{Serre:cohomologie-discrets}*{Th\'eor\`eme~10}.  In particular, the sign of \(\mu_G\) in \(\{-1, 0, 1\}\) agrees with the sign of the Euler characteristic of any \(\Gamma\).  The measure \(\mu_G\) is in fact a product measure \(\mu_G = \otimes_{v \in S} \,\mu_{\mathbf{G}(k_v)}\) corresponding to the above product decomposition.  At each archimedean place \(v\) in \(S\), the measure \(\mu_{\mathbf{G}(k_v)}\) is nonzero if and only if the \emph{fundamental rank} \(\delta(\mathbf{G}(k_v)) = \operatorname{rank}_\C (\mathfrak{g} \otimes_\R \C) - \operatorname{rank}_\C (\mathfrak{k} \otimes_\R \C)\) vanishes, where \(\mathfrak{g}\) and \(\mathfrak{k}\) denote the Lie algebras of \(\mathbf{G}(k_v)\) and of a maximal compact subgroup \(K \le \mathbf{G}(k_v)\), respectively.  Consequently, the fundamental rank is always positive if \(v\) is complex so that \(\mu_G\) vanishes unless the number field \(k\) is totally real.  If on the other hand \(v \in S\) is a real place such that \(\mathbf{G}(k_v)\) has vanishing fundamental rank, then the dimension of the \emph{symmetric space} \(\mathbf{G}(k_v)/K\) is an even number \(2r\) and the sign of \(\mu_{\mathbf{G}(k_v)}\) is \((-1)^r\).  If \(v\) is finite, then \(\mu_{\mathbf{G}(k_v)}\) is always nonzero and the sign is equal to \((-1)^r\) where this time \(r\) is the \(k_v\)-rank of \(\mathbf{G}\).  Thus the sign of \(\mu_G\), if nonzero, is given by \((-1)\) to the power of
  \[ d(\mathbf{G}) = \sum_{v \textup{ real}} \frac{\dim X^v}{2} + \sum_{v \in S, \,v \nmid \infty} \operatorname{rank}_{k_v} \mathbf{G} \mod 2. \]

  To prove Theorem~\ref{thm:main-theorem}, we will first show (in Corollary~\ref{cor:euler-zero}) that \(\chi(\Gamma_1) = 0\) if and only if \(\chi(\Gamma_2) = 0\).  In view of the above formula, the proof of Theorem~\ref{thm:main-theorem} will then be completed by showing in Section~\ref{section:cartan-types} that \(d(\mathbf{G_1}) = d(\mathbf{G_2})\) under the assumption that \(k\) is totally real and that for every real place \(v\) of \(k\) we have \(\delta(\mathbf{G_i}(k_v)) = 0\).

  \section{Brauer--Witt invariants and Poitou--Tate duality} \label{section:brauer}

  Still let \(\mathbf{G}\) be a simply-connected, absolutely almost simple, linear algebraic \(k\)-group and let \(\mathbf{G_0}\) be the up to \(k\)-isomorphism unique \(k\)-quasi-split group of which \(\mathbf{G}\) is an inner \(k\)-twist.  We obtain a finite \(k\)-group \(\mathbf{A_0}\) defined as the factor group in the short exact sequence
\[ 1 \rightarrow \Ad \mathbf{G_0} \rightarrow \Aut \mathbf{G_0} \rightarrow \mathbf{A_0} \rightarrow 1 \]
where \(\Ad \mathbf{G_0}\) is the adjoint form of \(\mathbf{G_0}\).  Note that \(\mathbf{A_0}\) is either trivial or the constant group scheme \(\Z/2\) unless \(\mathbf{G}\) has type \(D_4\).  As explained in~\cite{Serre:galois-cohomology}*{Section~I.5.5}, we obtain an induced exact sequence in Galois cohomology and the group \(\mathbf{G}\) corresponds to a unique class \(\xi\) in the image of the map
\[ H^1(k, \Ad \mathbf{G_0}) \longrightarrow H^1(k, \Aut \mathbf{G_0}). \]
The fiber of \(\xi\) is an \(\mathbf{A_0}(k)\)-orbit \(\eta\) in the set \(H^1(k, \Ad \mathbf{G_0})\).  The interpretation of \(\Ad \mathbf{G_0}\) as inner automorphism group yields the short exact sequence
\[ 1 \rightarrow Z(\mathbf{G_0}) \rightarrow \mathbf{G_0} \rightarrow \Ad \mathbf{G_0} \rightarrow 1 \]
where \(Z(\mathbf{G_0})\) denotes the center of \(\mathbf{G_0 }\).  By~\cite{Serre:galois-cohomology}*{Section~I.5.5}, we obtain a coboundary map in Galois cohomology
\[ \delta^1 \colon H^1(k, \Ad \mathbf{G_0}) \longrightarrow H^2(k, Z(\mathbf{G_0})). \]
Observing that \(\mathbf{A_0}(k)\) acts on \(Z(\mathbf{G_0})\), we obtain an induced \(\mathbf{A_0}(k)\)-action on \(H^2(k, Z(\mathbf{G_0}))\) and \(\delta^1\) is \(\mathbf{A_0}(k)\)-equivariant.  Let \(\overline{\delta^1}\) denote the corresponding orbit map.  Then \(\beta = \overline{\delta^1}(\eta) \in H^2(k, Z(\mathbf{G_0})) / \mathbf{A_0}(k)\) is a well defined invariant of the group \(\mathbf{G}\).  We adopt the terminology used by G.\,Harder in~\cite{Harder:bericht}*{Section~3.2} and call \(\beta\) the \emph{Brauer--Witt invariant} of \(\mathbf{G}\).  We have a diagram
\[
\begin{tikzcd}
\bigoplus_v H^1(k_v, \Ad \mathbf{G_0}) \arrow{r} & \bigoplus_v H^2(k_v, Z(\mathbf{G_0})) \\
{H^1(k, \Ad \mathbf{G_0})} \arrow{r}{\delta^1} \arrow{u} & {H^2(k, Z(\mathbf{G_0}))} \arrow{u}{s}
\end{tikzcd}
\]
where the lower map is surjective by~\cite{Platonov-Rapinchuk:algebraic-groups}*{Theorem~6.20, p.\,326}.  Endowing the upper two Galois cohomology sets with the diagonal action of \(\mathbf{A_0}(k)\) via the embeddings \(\mathbf{A_0}(k) \subset \mathbf{A_0}(k_v)\), all maps are equivariant.

Since \(Z(\mathbf{G_0})\) is a finite commutative group scheme, the \(\overline{k}\)-points form a finite commutative Galois module by functoriality, so that we obtain a description of the image of \(s\) from \emph{Poitou--Tate duality}~\cite{Harari:galois-cohomology}*{Theorem~17.13.(c), p.\,265}.  To explain this, we set for short \(Z = Z(\mathbf{G_0})\) and we let \(Z' = \operatorname{Hom}(Z, \mathbf{GL_1})\) be the \emph{Cartier dual} of \(Z\).  Then we have perfect local Tate duality pairings induced by the cup product
\[ H^0(k_v, Z') \otimes H^2(k_v, Z) \longrightarrow \Q / \Z \]
for finite places \(v\) of \(k\) and
\[ H^0(k_v, Z') \otimes H^2(k_v, Z) \longrightarrow \textstyle \frac{1}{2} \Z / \Z \]
for real places \(v\) of \(k\).  Here only for real places \(v\), the notation \(H^0(k_v, Z')\) denotes the Tate cohomology group, meaning the quotient of the invariants \(Z'^{\operatorname{Gal}(k_v)}\) by the norms \(N_{\operatorname{Gal}(k_v)} Z'\).  The local duality pairings sum up to the perfect Poitou--Tate pairing
\begin{equation} \label{eq:global-poitou-tate} \prod_v H^0(k_v, Z') \,\otimes\, \bigoplus_v H^2(k_v, Z) \longrightarrow \Q / \Z. \end{equation}
The image of \(s\) is precisely the annihilator under this pairing of the image of \(t \colon H^0(k, Z') \longrightarrow \prod_v H^0(k_v, Z')\).  Since \(s\) is moreover injective as proven in~\cite{Platonov-Rapinchuk:algebraic-groups}*{Lemma~6.19, p.\,336} or \cite{Prasad-Rapinchuk:existence}*{p.\,658}, we thus have a short exact sequence
\begin{equation} \label{eq:poitou-tate-ses} 0 \longrightarrow H^2(k, Z) \xrightarrow{\ s\ } \bigoplus_v H^2(k_v, Z) \xrightarrow{\ t^*\ } H^0(k, Z')^* \longrightarrow 0. \end{equation}
where \(t^*\) is the Pontryagin dual of \(t\) obtained by applying the functor \((\,\cdot\,)^* = \operatorname{Hom}(\,\cdot\,, \Q/\Z)\) and the local duality isomorphisms \(H^0(k_v, Z')^* \cong H^2(k_v, Z)\).  Recall that the action of an element \(\sigma \in \operatorname{Gal}(k)\) on \(f \in Z'\) is given by \((\sigma f)(z) = \sigma f(\sigma^{-1} z)\) so that
\[ H^0(k, Z') = Z'^{\operatorname{Gal}(k)} = \operatorname{Hom}_{\operatorname{Gal}(k)}(Z, \mathbf{GL_1}),  \]
meaning a \(k\)-defined character is just a \(\operatorname{Gal}(k)\)-equivariant character.

Returning to our Brauer--Witt invariant, the sequence \eqref{eq:poitou-tate-ses} tells us that every element in the \(\mathbf{A_0}(k)\)-orbit \(\beta\) maps under \(s\) to a family in \(\bigoplus_v H^2(k_v, Z)\) which sums to zero under \(t^*\).  We will next work out what this condition says explicitly for different types of the occurring Galois modules \(Z = Z(\mathbf{G_0})\).

If \(Z(\mathbf{G_0}) = \mu_n\), then \(H^2(k, Z) \cong \Br_n(k)\) is the subgroup of the Brauer group of \(k\) consisting of those elements whose order divides \(n\) and the short exact sequence~\eqref{eq:poitou-tate-ses} becomes
\[ 0 \longrightarrow \Br_n(k) \longrightarrow \bigoplus_v \Br_n(k_v) \longrightarrow \Z/n \longrightarrow 0 \]
which is also immediate from the classical Albert--Brauer--Hasse--Noether theorem.  We have similar results when \(Z \cong \mu_2 \times \mu_2\) or \(Z \cong \mathbf{R}_{l/k}(\mu_2)\) where \(\mathbf{R}_{l/k}\) denotes the restriction of scalars functor for a quadratic extension \(l/k\).  However, one more difficult case, which occurs in particular if \(\mathbf{G_0}\) has type \({}^2A_{2n-1}\), stands out.  In that case we have that \(Z\) is \(k\)-isomorphic to the kernel \(\mathbf{R}^{(1)}_{l/k}(\mu_{2n})\) of the norm map
\[ N \colon \mathbf{R}_{l/k} (\mu_{2n}) \longrightarrow \mu_{2n} \]
for a quadratic extension \(l/k\).  It will turn out that only the case when \(l/k\) is a CM-field, meaning \(k\) is totally real and \(l\) is totally imaginary, needs consideration.  There we have the following result.

  \begin{theorem} \label{thm:ses-a2nm1}
    Let \(l/k\) be a CM-field which defines the Galois module \(Z = \mathbf{R}^{(1)}_{l/k}(\mu_{2n})\).  Then we have a split short exact sequence
    \[ 0 \longrightarrow H^2(k, Z)  \longrightarrow \bigoplus_v H^2(k_v, Z) \longrightarrow \Z / 2 \longrightarrow 0 \]
    with \(H^2(k_v, Z) \cong \Z / 2n\) if \(v\) splits in \(l\) and \(H^2(k_v, Z) \cong \Z / 2\) if \(v\) is non-split.  The map to \(\Z/2\) sums up the \(\Z/2\)-reductions of all coordinates.
  \end{theorem}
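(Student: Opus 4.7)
The strategy is to deduce all four assertions of the theorem from \emph{Poitou--Tate duality} applied to \(Z\), after a direct computation of its Cartier dual.

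I would first identify \(Z'\). Let \(\epsilon \colon \operatorname{Gal}(k) \to \{\pm 1\}\) be the quadratic character cutting out \(l\). Writing \(\mathbf{R}_{l/k}(\mu_{2n})(\overline{k}) = \mu_{2n}(\overline{k})^2\) with \(\operatorname{Gal}(k)\) acting componentwise when \(\epsilon(\sigma) = 1\) and by swapping coordinates when \(\epsilon(\sigma) = -1\), projection to the first factor identifies \(Z\) with \(\mu_{2n}\) carrying the twisted action \(\sigma \cdot \zeta = (\sigma \zeta)^{\epsilon(\sigma)}\). Cartier dualizing yields \(Z' \cong \Z/2n\) with action \(\sigma \cdot x = \epsilon(\sigma) x\), whence
\[ H^0(k, Z') = \{x \in \Z/2n : 2x = 0\} = \{0, n\} \cong \Z/2. \]

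The nine-term Poitou--Tate sequence~\cite{Harari:galois-cohomology}*{Theorem~17.13.(c)} then directly produces the exact sequence
\[ H^2(k, Z) \longrightarrow \bigoplus_v H^2(k_v, Z) \longrightarrow H^0(k, Z')^\vee \cong \Z/2 \longrightarrow 0 \]
whose left-hand kernel is Pontryagin-dual to \(\ker(H^1(k, Z') \to \prod_v H^1(k_v, Z'))\). To obtain injectivity on the left I would show this kernel vanishes. Given a locally trivial cocycle \(\alpha\colon \operatorname{Gal}(k) \to Z'\), its restriction \(\beta = \alpha|_{\operatorname{Gal}(l)}\) is a \emph{homomorphism} into \(\Z/2n\), and local triviality at \(v\) provides \(x_v \in Z'\) with \(\alpha|_{\operatorname{Gal}(k_v)}(\sigma) = (\epsilon(\sigma) - 1)x_v\), which vanishes on the decomposition subgroup \(\operatorname{Gal}(l_w) \subseteq \operatorname{Gal}(k_v)\) for every \(w \mid v\). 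As decomposition groups densely generate \(\operatorname{Gal}(l)\) (Chebotarev), we conclude \(\beta = 0\), so \(\alpha\) is inflated from some \(\alpha_0 \in H^1(\operatorname{Gal}(l/k), \Z/2n) \cong \Z/2\) (with the \(\epsilon\)-twisted action). Since \(l\) is CM, \(k\) is totally real and admits a real place \(v_\infty\), which is necessarily non-split in \(l\). At such a non-split \(v_\infty\), the equality \(\operatorname{Gal}(l_{w_\infty}/k_{v_\infty}) = \operatorname{Gal}(l/k)\) identifies the relevant local and global \(H^1\)'s, and the local inflation \(H^1(\operatorname{Gal}(l_{w_\infty}/k_{v_\infty}), \Z/2n) \hookrightarrow H^1(k_{v_\infty}, Z')\) is injective by the five-term inflation-restriction sequence. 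Hence local triviality of \(\alpha\) at \(v_\infty\) forces \(\alpha_0 = 0\) and so \(\alpha = 0\).

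For the local groups: if \(v\) splits in \(l\) then \(\epsilon\) is trivial on \(\operatorname{Gal}(k_v)\) and \(Z \otimes_k k_v \cong \mu_{2n}\), giving \(H^2(k_v, Z) \cong \Br(k_v)[2n] \cong \Z/2n\); for non-split \(v\) (including all real places of the totally real field \(k\)), local Tate duality yields \(H^2(k_v, Z) \cong H^0(k_v, Z')^\vee \cong \Z/2\) by the same computation as globally. The Poitou--Tate right-hand map pairs each local class \(\xi_v\) with the global generator \(n \in H^0(k, Z') = \Z/2\) via the local cup product and invariant map; one checks directly that this reduces to mod-\(2\) reduction in both cases (in the split case, \(n \cdot \operatorname{inv}_v(\xi_v) = \tfrac{j}{2} \bmod \Z\) for \(\xi_v = \tfrac{j}{2n}\); in the non-split case, the pairing of generators is \(\tfrac{1}{2}\)). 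Finally, the generator of \(H^2(k_{v_\infty}, Z) = \Z/2\) at the chosen real place provides a canonical section, splitting the sequence.

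The main obstacle is the vanishing of the kernel on the left, which requires the combination of Chebotarev on \(\operatorname{Gal}(l)\) with injectivity of local inflation at a non-split archimedean place; it is precisely at this latter step that the CM hypothesis is genuinely used.
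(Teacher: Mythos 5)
Your proof is correct and follows the same Poitou--Tate strategy as the paper: identify \(Z' \cong (\Z/2n)^\epsilon\), compute \(H^0(k, Z') \cong \Z/2\), read off the cokernel of the localization map from Poitou--Tate duality, and compute the local groups and the pairing to pin down the map to \(\Z/2\).  Where you diverge is in two places, both to good effect.  First, for the local computation at non-split \(v\), the paper argues directly from Shapiro's lemma, the long exact sequence of the norm map \(\mathbf{R}_{l/k}(\mu_{2n}) \to \mu_{2n}\), and the fact that corestriction \(\Br(l_w) \to \Br(k_v)\) is an isomorphism for non-archimedean local extensions plus local Artin reciprocity; you instead invoke local Tate duality against \(Z'\), which is shorter but more abstract.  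Second, and more substantially, for the injectivity of \(H^2(k,Z) \to \bigoplus_v H^2(k_v,Z)\) the paper cites an external reference (\cite{Platonov-Rapinchuk:algebraic-groups}*{Lemma~6.19} or \cite{Prasad-Rapinchuk:existence}); you instead prove the dual statement \(\Sha^1(k, Z') = 0\) from scratch by first restricting to \(\operatorname{Gal}(l)\), where the action is trivial, killing that restriction via Chebotarev, then inflating from \(H^1(\operatorname{Gal}(l/k), \Z/2n) \cong \Z/2\), and finally killing the inflated class by testing at a real place (necessarily non-split since \(l\) is CM) where the local inflation map is injective.  This makes the crucial role of the CM hypothesis transparent and gives a self-contained proof; it is a genuine improvement in exposition.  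You also make the splitness explicit by exhibiting a section through the \(\Z/2\)-summand at a real place, a point the paper leaves unaddressed although it is immediate.  One small caveat: when you invoke Chebotarev, you should note (or at least be aware) that local triviality a priori only kills \(\beta\) on one decomposition group \(\operatorname{Gal}(l_{w_v}) \subset \operatorname{Gal}(l)\) per place \(v\) of \(k\), namely the one determined by the chosen embedding \(\overline{k} \hookrightarrow \overline{k_v}\); but a short computation with the cocycle relation shows \(\beta(\tilde\tau \sigma \tilde\tau^{-1}) = \epsilon(\tilde\tau)\beta(\sigma)\), so vanishing passes to all conjugates and hence to the decomposition groups of every place of \(l\), as Chebotarev requires.
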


  Note that it was shown in \cite{Prasad-Rapinchuk:existence}*{Theorem~3} that in general, the map \(H^2(k, Z) \longrightarrow \bigoplus_{v \neq v_0} H^2(k_v, Z)\) is injective if and only if \(v_0\) does not split in a certain extension \(l/k\) defined by the quasi-split type.  The theorem makes this statement quantitative in type \({}^2A_{2n-1}\): if \(v_0\) splits in \(l\), then the map is \(n\) to one.  We start the proof by determining the abelian groups \(H^2(k_v, Z)\) for the various places \(v\) of \(k\).

  \begin{proposition} \label{prop:galois-action}
    If the place \(v\) of \(k\) splits in \(l\), then \(H^2(k_v, Z) \cong \Z / 2n\).  If \(v\) is inert or ramified in \(l\), then \(H^2(k_v, Z) \cong \Z / 2\).
    \end{proposition}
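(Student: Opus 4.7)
My plan is to exploit the defining short exact sequence
\[ 1 \longrightarrow Z \longrightarrow R \xrightarrow{N} \mu_{2n} \longrightarrow 1 \]
with \(R = \mathbf{R}_{l/k}(\mu_{2n})\), and feed it into the long exact sequence in \(\operatorname{Gal}(k_v)\)-cohomology. Shapiro's lemma identifies \(H^i(k_v, R) \cong \bigoplus_{w\mid v} H^i(l_w, \mu_{2n})\), and under this identification the induced map \(N_*\) becomes the sum of local corestrictions \(\operatorname{Cor}_{l_w/k_v}\). I then extract \(H^2(k_v, Z)\) from the six-term segment
\[ H^1(k_v, R) \xrightarrow{N_*} H^1(k_v, \mu_{2n}) \longrightarrow H^2(k_v, Z) \longrightarrow H^2(k_v, R) \xrightarrow{N_*} H^2(k_v, \mu_{2n}), \]
which reduces everything to computing a cokernel of \(N_*\) in degree \(1\) and a kernel of \(N_*\) in degree \(2\).

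If \(v\) splits in \(l\), then \(l \otimes_k k_v \cong k_v \times k_v\), so \(R|_{k_v} \cong \mu_{2n} \times \mu_{2n}\) with \(N\) the product map and \(Z|_{k_v}\) its antidiagonal kernel, canonically identified with \(\mu_{2n}\). Since \(k\) is totally real and \(l\) totally imaginary, any split place is finite, so local duality yields \(H^2(k_v, Z) \cong H^2(k_v, \mu_{2n}) \cong \Br(k_v)[2n] \cong \Z/2n\). In the non-split non-archimedean subcase, \(l_w/k_v\) is a quadratic extension of local fields. The compatibility \(\operatorname{inv}_{k_v}\circ\operatorname{Cor}_{l_w/k_v} = \operatorname{inv}_{l_w}\) shows that \(N_*\colon H^2(l_w, \mu_{2n}) \to H^2(k_v, \mu_{2n})\) is an isomorphism \(\Z/2n \xrightarrow{\sim} \Z/2n\), so the kernel in degree \(2\) is trivial. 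Via Kummer theory, \(N_*\) on \(H^1\) is the field norm \(l_w^\times/(l_w^\times)^{2n} \to k_v^\times/(k_v^\times)^{2n}\), whose cokernel equals \(k_v^\times/(N_{l_w/k_v}(l_w^\times)\cdot(k_v^\times)^{2n})\). Since every square in \(k_v^\times\) is a norm from a quadratic extension, \((k_v^\times)^{2n} \subset N_{l_w/k_v}(l_w^\times)\), so the cokernel reduces to \(k_v^\times/N_{l_w/k_v}(l_w^\times) \cong \Z/2\) by local class field theory, giving \(H^2(k_v, Z) \cong \Z/2\). In the non-split archimedean subcase one has \(k_v = \R\) and \(l_w = \C\); since \(H^i(\C,-)\) vanishes for \(i \ge 1\), the long exact sequence collapses to \(H^2(\R, Z) \cong H^1(\R, \mu_{2n}) = \R^\times/(\R^\times)^{2n} = \Z/2\).

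The main obstacle is the non-split non-archimedean case: one needs both that the \(H^2\)-norm is surjective (so that \(H^2(k_v, Z)\) is accounted for entirely by the degree-\(1\) cokernel) and that the remaining quotient is visibly the class-field-theoretic group \(k_v^\times/N_{l_w/k_v}(l_w^\times)\). Once the classical compatibility between corestriction and the local invariant map is invoked and the squares-are-norms observation is made, the rest is a direct bookkeeping exercise.
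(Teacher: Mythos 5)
Your proposal is correct and follows essentially the same route as the paper: Shapiro's lemma to compute cohomology of \(\mathbf{R}_{l/k}(\mu_{2n})\), the long exact sequence attached to \(1 \to Z \to \mathbf{R}_{l/k}(\mu_{2n}) \xrightarrow{N} \mu_{2n} \to 1\), identification of \(N_*\) with corestriction (an isomorphism on \(\Br\) in the non-archimedean non-split case), and the observation \((k_v^\times)^{2n} \subseteq N(l_w^\times)\) combined with local class field theory. The only cosmetic differences are that you identify \(Z|_{k_v}\) directly with \(\mu_{2n}\) in the split case instead of tracing the exact sequence, and you handle the archimedean non-split case as a separate collapse of the sequence rather than folding it into the corestriction argument.
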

    
    \begin{proof}
      First, we recall that for every \(l\)-group \(\mathbf{H}\), we have
      \[ \mathbf{R}_{l/k}(\mathbf{H}) \cong \prod_{w \mid v} \mathbf{R}_{l_w/k_v}(\mathbf{H})\]
      as \(\operatorname{Gal}(k_v)\)-modules.  This is stated in~\cite{Platonov-Rapinchuk:algebraic-groups}*{p.\,50} without proof, so let us quickly say that this holds because for every extension \(E/k_v\), we have a chain of natural isomorphisms
      \begin{align*} 
        & \textstyle \mathbf{R}_{l/k}(\mathbf{H}) (E) \cong \mathbf{H}(l \otimes_k E) \cong \mathbf{H}((l \otimes_k k_v) \otimes_{k_v} E) \cong \mathbf{H}((\prod_{w \mid v} l_w) \otimes_{k_v} E) \\
       & \textstyle \cong \mathbf{H}(\prod_{w \mid v} (l_w \otimes_{k_v} E)) \cong \prod_{w \mid v} \mathbf{H}(l_w \otimes_{k_v} E) \cong \prod_{w \mid v} \mathbf{R}_{l_w/k_w}(\mathbf{H}) (E).
      \end{align*}
      In the special case \(\mathbf{H} = \mu_{2n}\), we obtain
      \[ H^1(k_v, \mathbf{R}_{l/k}(\mu_{2n})) \cong \prod_{w \mid v} H^1(k_v, \mathbf{R}_{l_w/k_w} (\mu_{2n})) \cong \prod_{w \mid v} H^1(l_w, \mu_{2n}) \]
      by Shapiro's lemma~\cite{Platonov-Rapinchuk:algebraic-groups}*{p.\,20 and below Lemma~2.3, p.\,73}.  Therefore
      \[ H^1(k_v, \mathbf{R}_{l/k}(\mu_{2n})) \cong \prod_{w \mid v} l_w^* / (l_w^*)^{2n}.\]
      Similarly, we have an isomorphism
      \[H^2(k_v, \mathbf{R}_{l/k}(\mu_{2n})) \cong \prod_{w \mid v} \Br_{2n}(l_w). \]
      Thus the short exact sequence
      \[ 1 \rightarrow \mathbf{R}^{(1)}_{l/k} (\mu_{2n}) \rightarrow \mathbf{R}_{l/k} (\mu_{2n}) \rightarrow \mu_{2n} \rightarrow 1 \]
      induces an exact Galois cohomology sequence
\[ \prod_{w \mid v} l_w^* / (l_w^*)^{2n} \xrightarrow{N_v} k_v^* / (k_v^*)^{2n} \rightarrow H^2(k_v, Z) \rightarrow \prod_{w \mid v} \Br_{2n}(l_w) \xrightarrow{N_v} \Br_{2n}(k_v) \]
which can take one of two forms.  On the one hand, if \(v\) splits in \(l\), meaning \(l \subset k_v\), then the norm maps
\[ N \colon \prod_{w \mid v} l_w^* \longrightarrow k_v^* \quad \text{ and } \quad N \colon \prod_{w \mid v} \Br_{2n}(l_w) \longrightarrow \Br_{2n}(k_v) \]
reduce to the maps \(k_v^* \times k_v^* \longrightarrow k_v^*\) and \(\Br_{2n}(k_v) \times \Br_{2n}(k_v) \rightarrow \Br_{2n}(k_v)\) given by multiplication.  Exactness of the sequence thus shows that
\[ H^2(k_v, Z) \cong \{ (x,x^{-1}) \colon x \in \Br_{2n}(k_v) \} \cong \Br_{2n}(k_v), \]
so abstractly \(H^2(k_v, Z) \cong \Z / 2n\) if \(v\) is split.  Note that real places of \(k\) do not split because \(l\) is a CM-field.

On the other hand, if \(v\) is inert or ramified in \(l\), then \(v\) extends to a unique valuation \(w\) on \(l\).  We obtain the corresponding unique quadratic extension \(l_w / k_v\).  If \(w\) is non-archimedean, then the norm map \(N_v \colon \Br_{2n}(l_w) \longrightarrow \Br_{2n}(k_v)\) is an isomorphism because it extends to the corestriction map \(\Br(l_w) \longrightarrow \Br(k_v)\) which is well-known to be an isomorphism for extensions of local fields, see for instance~\cite{Lorenz:algebra2}*{Satz~10, p.\,311}.  Of course, if \(w\) is complex, then \(\Br_{2n}(l_w)\) is trivial, so \(N_v\) has trivial kernel, too.  Since \(l_w / k_v\) is quadratic, we have \((k_v^*)^{2n} \subseteq N(l_w^*)\), hence exactness of the sequence and local Artin reciprocity show that
\[ H^2(k_v, Z) \cong k_v^* / N(l_w^*) \cong \operatorname{Gal}(l_w/k_v) \cong \Z / 2. \qedhere \]
\end{proof}

Next, we want to determine the group of \(k\)-characters
\[ H^0(k, Z') = \operatorname{Hom}_{\operatorname{Gal}(k)}(\mathbf{R}^{(1)}_{l/k}(\mu_{2n}), \mathbf{GL_1}). \]
To this end, we need to understand the action \(\operatorname{Gal}(k) \curvearrowright \mathbf{R}^{(1)}_{l/k}(\mu_{2n})\).

\begin{proposition}
The Galois module \(\mathbf{R}^{(1)}_{l/k}(\mu_{2n})\) is given by the abelian group \(\mu_{2n}(\overline{k})\) on which \(\sigma \in \operatorname{Gal}(k)\) acts functorially and in addition by inversion if \(\sigma \notin \operatorname{Gal}(l)\).
\end{proposition}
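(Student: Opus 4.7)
The plan is to unravel the functor of points of the Weil restriction and then track what happens on the kernel of the norm map.

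First I would identify the \(\overline{k}\)-points. Since \(l/k\) is separable of degree~\(2\), there is a canonical \(\operatorname{Gal}(k)\)-equivariant \(\overline{k}\)-algebra isomorphism \(l \otimes_k \overline{k} \cong \overline{k} \times \overline{k}\) in which the two factors correspond to the two \(k\)-embeddings \(\iota_1, \iota_2 \colon l \hookrightarrow \overline{k}\). The defining adjunction of the restriction of scalars then gives
\[ \mathbf{R}_{l/k}(\mu_{2n})(\overline{k}) = \mu_{2n}(l \otimes_k \overline{k}) \cong \mu_{2n}(\overline{k}) \times \mu_{2n}(\overline{k}). \]
The \(\operatorname{Gal}(k)\)-action on the left is the one induced on \(\overline{k}\)-points; on the right it is the combination of the functorial action on each factor with the permutation of the two embeddings via \(\sigma \cdot \iota = \sigma \circ \iota\). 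Since \(\operatorname{Gal}(l)\) is precisely the stabilizer of each \(\iota_j\), we obtain
\[ \sigma \cdot (x_1, x_2) = \begin{cases} (\sigma x_1, \sigma x_2), & \sigma \in \operatorname{Gal}(l), \\ (\sigma x_2, \sigma x_1), & \sigma \notin \operatorname{Gal}(l). \end{cases} \]

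Next I would compute the norm and its kernel. On \(\overline{k}\)-points, the norm map \(N \colon \mathbf{R}_{l/k}(\mu_{2n}) \to \mu_{2n}\) sends \((x_1, x_2)\) to the product \(x_1 x_2\) (this is visible from the description of \(N\) as the determinant of multiplication, which on the split algebra \(\overline{k}\times\overline{k}\) is componentwise multiplication). Hence the norm-one subgroup is
\[ \mathbf{R}^{(1)}_{l/k}(\mu_{2n})(\overline{k}) = \{(x, x^{-1}) : x \in \mu_{2n}(\overline{k})\}, \]
and the projection onto the first factor gives an isomorphism of abstract abelian groups with \(\mu_{2n}(\overline{k})\).

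Finally I would transport the Galois action across this identification. If \(\sigma \in \operatorname{Gal}(l)\), then \(\sigma \cdot (x, x^{-1}) = (\sigma x, (\sigma x)^{-1})\), which corresponds to \(x \mapsto \sigma x\), i.e.\ the functorial action. If \(\sigma \notin \operatorname{Gal}(l)\), then \(\sigma \cdot (x, x^{-1}) = (\sigma x^{-1}, \sigma x) = ((\sigma x)^{-1}, \sigma x)\), which under the first-factor projection corresponds to \(x \mapsto (\sigma x)^{-1}\), i.e.\ the functorial action followed by inversion. This is exactly the statement of the proposition. There is no real obstacle here; the only point requiring care is the bookkeeping of how the nontrivial coset \(\operatorname{Gal}(k) \setminus \operatorname{Gal}(l)\) simultaneously swaps the two factors and, after restriction to the antidiagonal, manifests as the inversion automorphism of \(\mu_{2n}\).
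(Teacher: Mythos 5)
Your proof is correct and takes essentially the same approach as the paper: decompose \(l \otimes_k \overline{k} \cong \overline{k} \times \overline{k}\) via the two embeddings of \(l\), identify the Galois action as diagonal (plus a swap on the nontrivial coset), observe that the norm becomes componentwise multiplication so the norm-one subgroup is the antidiagonal, and read off the action after projecting to one factor. The only cosmetic difference is that the paper makes the first step explicit by writing \(l = k(\sqrt{\alpha})\) and factoring \(\overline{k}[x]/(x^2-\alpha)\), where you instead invoke the canonical equivariant splitting of the separable algebra directly.
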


\begin{proof}
Pick \(\alpha \in k\) such that \(l = k(\sqrt{\alpha})\) to compute
\begin{align*}
  l \otimes_k \overline{k} & \,\cong\, k[x]/(x^2 - \alpha) \,\otimes_k\, \overline{k} \,\cong\, \overline{k}[x] / (x^2-\alpha) \\
                           & \cong \overline{k}[x]/(x-\sqrt{\alpha}) \,\times\, \overline{k}[x]/(x + \sqrt{\alpha}) \cong \overline{k} \times \overline{k}
\end{align*}
where the last isomorphism is the product of the evaluation maps in \(\sqrt{\alpha}\) and \(-\sqrt{\alpha}\), respectively.  Thus the Galois action on the {\'e}tale algebra \(l \otimes \overline{k}\) has the following description under this isomorphism.  Every \(\sigma \in \operatorname{Gal}(l) \subset \operatorname{Gal}(k)\) acts diagonally on \(\overline{k} \times \overline{k}\) while every \(\sigma \in \operatorname{Gal}(k) \setminus \operatorname{Gal}(l)\) acts diagonally and by swapping the two coordinates.  We thus have
\[ \mathbf{R}_{l/k}(\mu_{2n})(\overline{k}) \cong \mu_{2n} (l \otimes_k \overline{k}) \cong \mu_{2n}(\overline{k}) \times \mu_{2n}(\overline{k}) \]
where again \(\sigma \in \operatorname{Gal}(k)\) acts diagonally and by swapping if \(\sigma\) does not fix \(l\) pointwise.  Under this isomorphism, the inclusion \(\mathbf{R}_{l/k}(\mu_{2n})(k) \subset \mathbf{R}_{l/k}(\mu_{2n})(\overline{k})\) becomes the diagonal inclusion \(\mu_{2n}(l) \subset \mu_{2n}(\overline{k}) \times \mu_{2n}(\overline{k})\) induced by the two embeddings of \(l\) in \(\overline{k}\).  Hence the norm map
\[ N \colon \mathbf{R}_{l/k}(\mu_{2n})(\overline{k}) \longrightarrow \mu_{2n}(\overline{k}) \]
  corresponds to the product map
  \[ \mu_{2n}(\overline{k}) \times \mu_{2n}(\overline{k}) \longrightarrow \mu_{2n}(\overline{k}) \]
  and \(\mathbf{R}^{(1)}_{l/k}(\mu_{2n})(\overline{k})\) is the subgroup of \(\mu_{2n}(\overline{k}) \times \mu_{2n}(\overline{k})\) given by pairs of conjugate (equivalently inverse) roots of unity.
\end{proof}

Still assuming that \(l/k\) is a CM-field, we abbreviate for simplicity \(\mu_{2n}^{(l)} = \mathbf{R}^{(1)}_{l/k}(\mu_{2n})\) in what follows.  Let us compute the \(k\)-character group
  \[ H^0(k, Z') = \operatorname{Hom}_{\operatorname{Gal}(k)}(\mu_{2n}^{(l)}, \mu_{2n}). \]
  Since \(\operatorname{Hom}(\mu^{(l)}_{2n}, \mu_{2n})\) consists of the maps \(f(\zeta) = \zeta^k\) for \(k = 0, \ldots, 2n-1\) and \(\zeta = \exp(\pi \textup{i} / n)\), the \(\operatorname{Gal}(k)\)-equivariance condition \(f(\sigma(\zeta)) = \sigma(f(\zeta))\) for all \(\sigma \in \operatorname{Gal}(k)\) gives \(\zeta^{-k} = \zeta^k\) or equivalently \(k \in \{0,n\}\).  Thus
  \[ H^0(k, Z') = \{ \zeta \mapsto 1, \zeta \mapsto -1  \} \cong \Z / 2 \]
  and accordingly \(H^0(k_v, Z') = \operatorname{Hom}_{\operatorname{Gal}(k_v)}(\mu_{2n}^{(l)}, \mu_{2n})\) is isomorphic to either \(\Z/2n\) or \(\Z/2\) depending on whether \(v\) splits (meaning \(l \subset k_v\) so that \(\mu^{(l)}_{2n} = \mu_{2n}\) over \(k_v\)) or does not split in \(l\), respectively.  If \(v\) is a real place, recall that we adopted the usual convention that \(H^0(k_v, Z')\) denotes in fact the Tate cohomology group \(\widehat{H}^0(k_v, Z')\) defined as the quotient of the invariants \(Z'^{\operatorname{Gal}(k_v)}\) by the norms \(N_{\operatorname{Gal}(k_v)} Z'\).  Since in a CM-field extension all real places of \(k\) become complex in \(l\), we have
  \[ H^0(k_v, Z') \cong \{ \zeta \mapsto \pm 1 \} / \{ 1 \} \cong \Z / 2 \]
  for each real place \(v\).  We are now readily prepared to prove the theorem.

\begin{proof}[Proof of Theorem~\ref{thm:ses-a2nm1}.]
  By Proposition~\ref{prop:galois-action} and the above calculations, the local Tate duality pairings
  \[ H^0(k_v, Z') \otimes H^2(k_v, Z) \longrightarrow \Q / \Z \]
  are perfect parings of the form
  \begin{gather*}
    \Z / 2n \otimes \Z / 2n \longrightarrow \Z / 2n \subset \Q / \Z \ \ \text{if } v \text{ is split,} \\
    \Z / 2 \otimes \Z / 2 \longrightarrow \Z / 2 \subset \Q / \Z \ \ \text{if } v \text{ is non-split.}
  \end{gather*}
  Hence global Poitou--Tate duality takes the form that the image of
  \[ s \colon H^2(k, Z) \longrightarrow \bigoplus_v H^2(k_v, Z) \]
  is the annihilator of the element \(((n, n, \ldots), (1, 1, \ldots))\) in the product
  \[ \prod_v H^0(k_v, Z') \cong \prod_{v \text{ split}} \Z / 2n \ \times \!\!\! \prod_{v \text{ non-split}} \!\!\!\! \Z / 2. \]
with respect to the pairing~\eqref{eq:global-poitou-tate}.  Equivalently, it is the annihilator of the element \((1, 1, \ldots)\) if we compose the pairings \(\Z / 2n \otimes \Z / 2n \longrightarrow \Z / 2n \subset \Q / \Z\) with the canonical projection \(\Z/2n \rightarrow \Z / 2\).  Together with the injectivity of \(s\) we already mentioned, this completes the proof.
\end{proof}
    
\section{Adelically isomorphic groups} \label{section:adelic}

In this section, we collect some properties that two simple algebraic groups must share if they contain \(S\)-arithmetic subgroups with isomorphic congruence completions.  We denote by \(\mathbb{A}_{k,S}\) the \emph{ring of \(S\)-adeles} of \(k\), meaning the subring of the product \(\prod_{v \notin S} k_v\) consisting of those elements which have almost all coordinates in the valuation ring \(\mathcal{O}_v \le k_v\).  We have a diagonal embedding \(k \le \mathbb{A}_{k, S}\).  It is a well-known consequence of strong approximation~\cite{Platonov-Rapinchuk:algebraic-groups}*{Theorem~7.12, p.\,427} that the congruence completion \(\overline{\Gamma}\) of any infinite \(S\)-arithmetic subgroup \(\Gamma \le \mathbf{G}(k)\) coincides with the closure of \(\Gamma\) in \(\mathbf{G}(\mathbb{A}_{k,S})\) along the embedding \(\Gamma \le \mathbf{G}(k) \le \mathbf{G}(\mathbb{A}_{k,S})\), and this closure is an open compact subgroup of \(\mathbf{G}(\mathbb{A}_{k,S})\).

Returning to the setting of Theorem~\ref{thm:main-theorem}, the commensurability of the congruence completions \(\overline{\Gamma_1}\) and \(\overline{\Gamma_2}\) thus effects that a finite index subgroup of \(\Gamma_1\) embeds into \(\mathbf{G_2}(\mathbb{A}_{k_2,S_2})\) such that the closure of the image has nonempty interior and vice versa.  Thus the assumptions of \emph{adelic superrigidity}~\cite{Kammeyer-Kionke:adelic-superrigidity}*{Theorem~3.2} are satisfied and we conclude as in \cite{Kammeyer-Kionke:adelic-superrigidity}*{Theorem~3.4} that there exists an isomorphism \(\phi \colon  \mathbb{A}_{k_1,S_1} \xrightarrow{\cong} \mathbb{A}_{k_2, S_2}\) over which  \(\mathbf{G_1}\) and \(\mathbf{G_2}\) are isomorphic.  In fact, loc.cit.\ states this in case \(S_i\) consists of the infinite places only.  But the extension to general finite sets of places \(S_i\) is explained in~\cite{KKK:volume}*{Appendix~A}.  Note that there we require that \(S_i\) contains no finite places at which \(\mathbf{G_i}\) is anisotropic which we may assume replacing \(\Gamma_i\) with a finite index subgroup if need be.  In fact, by construction the isomorphism \(\phi\) is assembled from local isomorphisms \(\phi_v \colon {k_1}_v \xrightarrow{\cong} {k_2}_{j(v)}\) where \(j \colon S_1^c \xrightarrow{\cong} S_2^c\) is a bijection between the complements of \(S_1\) and \(S_2\), and the isomorphism \(\mathbf{G_1} \cong_\phi \mathbf{G_2}\) splits into isomorphisms \(\mathbf{G_1} \cong_{\phi_v} \mathbf{G_2}\) for \(v \in S_1^c\).  This shows in particular that \(\mathbf{G_1}\) and \(\mathbf{G_2}\) have the same Cartan--Killing type.

Let \(\mathbf{G_{0,i}}\) be the unique quasi-split \(k_i\)-group of which \(\mathbf{G_i}\) is an inner \(k_i\)-form.  As we explained in the previous section, we have a corresponding finite group scheme \(\mathbf{A_{0,i}}\) of outer automorphisms of \(\mathbf{G_{0,i}}\).  The group \(\mathbf{G_i}\) determines a well-defined Brauer--Witt invariant \(\beta_i \in H^2(k_i, Z(\mathbf{G_{0,i}})) / \mathbf{A_{0,i}}(k_i)\).  Since \(\mathbf{G_1} \cong_{\phi_v} \mathbf{G_2}\) for \(v \in S_1^c\), we have isomorphisms \(\mathbf{G_{0,1}} \cong_{\phi_v} \mathbf{G_{0,2}}\) for \(v \in S_1^c\) such that the induced isomorphisms
\begin{equation} \label{eq:local-iso} H^2({k_1}_v, Z(\mathbf{G_{0,1}}))/\mathbf{A_{0,1}}({k_1}_v) \xrightarrow{\ \cong\ } H^2({k_2}_{j(v)}, Z(\mathbf{G_{0,2}}))/\mathbf{A_{0,2}}({k_2}_{j(v)}) \end{equation}
send the localizations \({\beta_1}_v\) to \({\beta_2}_{j(v)}\).

\medskip
For what comes next, we single out the case when \(\mathbf{G_1}\) and \(\mathbf{G_2}\) have type \({}^2 A_{2n-1}\) or \({}^2 D_n\).  In that case, we always have \(H^0(k_i, Z(\mathbf{G_{0,i}})) \cong \Z/2\) and we verify in the next proposition that the local Tate duality homomorphism descends to a map
 \begin{equation} \label{eq:local-tate-orbit} H^2({k_i}_v, Z(\mathbf{G_{0,i}}))/\mathbf{A_{0,i}}({k_i}_v) \longrightarrow H^0(k_i, Z(\mathbf{G_{0,i}})')^* \cong \Z/2 \end{equation}
 on \(\mathbf{A_{0,i}}({k_i}_v)\)-orbits.  So we obtain well-defined ``mod~2 reductions'' \({\widetilde{\beta_i}}_v  \in \Z/2\) of local Brauer--Witt invariants such that \({\widetilde{\beta_1}}_v = {\widetilde{\beta_2}}_{j(v)}\) for \(v \in S_1^c\):
 
\begin{proposition}
  Let \(\mathbf{G_1}\) and \(\mathbf{G_2}\) be of type \({}^2 A_{2n-1}\) or \({}^2 D_n\).  Then for all \(v \in S_i^c\), the isomorphism~\eqref{eq:local-iso} and the maps~\eqref{eq:local-tate-orbit} form a well-defined commutative triangle
  \[ \begin{tikzcd}[row sep=3mm, column sep=2mm]
      H^2({k_1}_v, Z(\mathbf{G_{0,1}}))/\mathbf{A_{0,1}}({k_1}_v) \ar{rr}{\cong} \ar[dr] &  & H^2({k_2}_{j(v)}, Z(\mathbf{G_{0,2}}))/\mathbf{A_{0,2}}({k_2}_{j(v)}) \ar[dl] \\
        & \Z/2. &
      \end{tikzcd} \]
\end{proposition}

\begin{proof}
  If \(\mathbf{G_1}\) and \(\mathbf{G_2}\) are of type \({}^2 A_{2n-1}\) or \({}^2 D_{2n+1}\), we have \(Z(\mathbf{G_i}) \cong \mathbf{R}^{(1)}_{l/k}(\mu_{2n})\) or \(Z(\mathbf{G_i}) \cong \mathbf{R}^{(1)}_{l/k}(\mu_4)\), respectively.  Suppose \(\mathbf{G_i}\) and hence \(\mathbf{G_{0,i}}\) remains an outer form at \(v \in S^c_i\).  Then we saw in Proposition~\ref{prop:galois-action} that \(H^2({k_i}_v, Z(\mathbf{G_{0,i}})) \cong \Z / 2\).  This implies that the \(\mathbf{A_{0,i}}({k_i}_v)\)-action (by group automorphisms!)\ is trivial and the triangle just consists of the unique isomorphisms between two cyclic groups of order two.

  If on the other hand \(\mathbf{G_{0,i}}\) splits at \(v\), then Proposition~\ref{prop:galois-action} says that \(H^2({k_i}_v, Z(\mathbf{G_{0,i}}))) \cong \Z/2n\) is cyclic, so the map
    \begin{equation} \label{eq:local-tate} H^2({k_i}_v, Z(\mathbf{G_{0,i}})) \longrightarrow H^0(k_i, Z(\mathbf{G_{0,i}})')^* \cong \Z/2 \end{equation}
    must be the unique surjective homomorphism.  It is explained in~\cite{Kneser:galois}*{p.\,254} that the \(\mathbf{A_{0,i}}({k_i}_v)\)-action on \(H^2({k_i}_v, Z(\mathbf{G_{0,i}}))\) is given by inversion so the map is well-defined on \(\mathbf{A_{0,i}}({k_i}_v)\)-orbits.  By uniqueness, the triangle commutes.
    
    \smallskip
    Now assume \(\mathbf{G_1}\) and \(\mathbf{G_2}\) are of type \({}^2 D_{2n}\).  Then \(Z(\mathbf{G_i}) \cong \mathbf{R}_{l/k}(\mu_2)\).  If \(\mathbf{G_{0,i}}\) is an outer form at \(v \in S^c_i\), then \(H^2({k_i}_v, Z(\mathbf{G_{0,i}})) \cong \Z/2\) so the same arguments as above apply.  If \(\mathbf{G_{0,i}}\) splits at \(v\), then decoding the definition of local Tate duality via cup products, the map~\eqref{eq:local-tate} can be canonically identified with the addition \(\Z/2 \oplus \Z/2 \rightarrow \Z/2\).  The \(\mathbf{A_{0,1}}({k_i}_v)\)-action swaps the two coordinates, so the map is well-defined on \(\mathbf{A_{0,i}}({k_i}_v)\)-orbits.  The triangle commutes because the horizontal isomorphism is induced by a \(\Z/2\)-equivariant isomorphism \(H^2({k_1}_v, Z(\mathbf{G_{0,1}})) \xrightarrow{\cong} H^2({k_2}_{j(v)}, Z(\mathbf{G_{0,2}}))\).
\end{proof}

In the types of the proposition, we have seen that the map \(t^*\) in the Poitou--Tate sequence~\eqref{eq:poitou-tate-ses} just sums up the mod~2 reductions, so we obtain the following corollary.

\begin{proposition} \label{prop:mod2sum}
  Let \(\mathbf{G_1}\) and \(\mathbf{G_2}\) be of type \({}^2 A_{2n-1}\) or \({}^2 D_n\).  Then
  \[ \sum_{v \in S_1} {\widetilde{\beta_1}}_v = \sum_{v \in S_2} {\widetilde{\beta_2}}_{j(v)} \]
in \(H^0(k_i,Z') \cong \Z/2\).
\end{proposition}

\begin{proof}
By Poitou--Tate duality, the sum of all reduced local Brauer--Witt invariants is zero for \(\mathbf{G_1}\) and \(\mathbf{G_2}\).  By the previous proposition, the same number of non-trivial summands occur in \(S_1^c\) and \(S_2^c\).
\end{proof}

As another consequence of the isomorphism \(\mathbf{G_1} \cong_\phi \mathbf{G_2}\), we record the following observation.

\begin{proposition} \label{prop:outer-forms-in-s}
  We have \(|S_1| = |S_2|\).  Let moreover \(n_i\) be the number of non-archimedean places in \(S_i\) at which \(\mathbf{G_i}\) is an outer form of order two.  If \(\mathbf{G_i}\) is of triality type \({}^6 D_4\), assume that \(k_i = \Q\).  Then \(n_1 = n_2\).
\end{proposition}

\begin{proof}
  Since we have \(\mathbb{A}_{k_1, S_1} \cong \mathbb{A}_{k_2, S_2}\), the number fields \(k_1\) and \(k_2\) have the same unordered tuples of inertia degrees over almost every rational prime \(p\).  It then follows that they have the same unordered tuples of inertia degrees over every prime \(p\) \cite{Klingen:similarities}*{Theorem~III.1.3, p.\,77}, meaning \(k_1\) and \(k_2\) are \emph{arithmetically equivalent}.  This shows in particular that every rational prime splits into the same number of primes in \(k_1\) and \(k_2\).  Additionally, arithmetically equivalent number fields have the same signature~\cite{Klingen:similarities}*{Theorem III.1.4.h), p.\,79}, meaning the same number of real and complex places, respectively.  So the bijection \(j \colon S_1^c \rightarrow S_2^c\) can be extended to a bijection \(S_1 \rightarrow S_2\) which shows \(|S_1| = |S_2|\).

  Let \(\mathbf{G}\) be the unique simply-connected simple \(\Q\)-split group of the same Cartan--Killing type as \(\mathbf{G_1}\) and \(\mathbf{G_2}\).  Let \(\Delta\) be the Dynkin diagram of \(\mathbf{G}\).  The symmetry group \(\operatorname{Sym} \Delta\) of the diagram can be identified with the outer automorphism group of \(\mathbf{G}\).  Then \(\mathbf{G_i}\) corresponds to a unique class \([a_i] \in H^1(k_i, \operatorname{Aut} \mathbf{G})\) and we let \([b_i]\) be the image of \([a_i]\) in \(H^1(k_i, \operatorname{Sym} \Delta)\).  Since \(\operatorname{Sym} \Delta\) is a trivial Galois module, the cohomology class \([b_i]\) is in fact a conjugacy class of homomorphisms \(\operatorname{Gal}(k_i) \rightarrow \operatorname{Sym} \Delta\).  The Galois extension \(l_i / k_i\) determined by the corresponding kernel is minimal with the property that \(\mathbf{G_i}\) becomes an inner form over \(l_i\).  We claim that also \(l_1\) and \(l_2\) are arithmetically equivalent to one another.  Indeed, for each \(v \in S_1^c\), the fact that \(\mathbf{G_1}\) is isomorphic to \(\mathbf{G_2}\) over the isomorphism \(\phi_v\) means that \(\phi_v\) induces a commutative square
  \[
  \begin{tikzcd}
    H^1({k_1}_v, \operatorname{Aut} \mathbf{G}) \arrow{d}{\cong} \arrow{r}& H^1({k_1}_v, \operatorname{Sym} \Delta) \arrow{d}{\cong} \\
    H^1({k_2}_{j(v)}, \operatorname{Aut} \mathbf{G}) \arrow{r} & H^1({k_2}_{j(v)}, \operatorname{Sym} \Delta)
  \end{tikzcd}
  \]
  such that the vertical maps send \([a_1]\) to \([a_2]\) and \([b_1]\) to \([b_2]\), respectively.  It follows that for each \(v \in S_1^c\), we have \({l_1}_w \cong {l_2}_{w'}\) for all places \(w \mid v\) and \(w' \mid j(v)\), so \(l_1\) and \(l_2\) are arithmetically equivalent.  Now if \([l_i \colon k_i]\) equals one or three, we have \(n_1 = n_2 = 0\).  If \([l_i \colon k_i] = 2\), then \(\mathbf{G_i}\) is an outer form of order two at a non-archimedean place \(v \in S_i\) if and only if \(v\) is inert or ramified in \(l_i\).  Let \(m_i\) be the number of non-archimedean places of \(k_i\) in \(S_i\) and let \(r_i\) be the number of places of \(l_i\) that lie over non-archimedean places of \(k_i\) in \(S_i\).  Then \(n_i = 2m_i - r_i\).  We have \(m_1 = m_2\) by what we discussed above. Since \(l_1\) and \(l_2\) are arithmetically equivalent, we also have \(r_1 = r_2\), hence \(n_1 = n_2\).

  Finally, if \([l_i : k_i] = 6\), then \(\mathbf{G_i}\) has type \({}^6 D_4\), so by assumption \(k_i = \Q\) meaning \(l_i\) has degree six over \(\Q\).  It is then a result of Perlis~\cite{Perlis:equation} that all number fields of degree six or less over \(\Q\) are \emph{arithmetically solitary}, meaning arithmetical equivalence implies isomorphism.  So \(l_1 \cong l_2\), whence \(n_1 = n_2\).
\end{proof}

The fact that the fields \(l_1\) and \(l_2\) from the last proof are arithmetically equivalent allows the following conclusion.

\begin{proposition} \label{prop:same-f-rank}
  The Lie groups \(\prod_{v \mid \infty} \mathbf{G_1}({k_1}_v)\) and \(\prod_{v \mid \infty} \mathbf{G_2}({k_2}_v)\) have the same number of inner real, outer real, and complex factors.
\end{proposition}

\begin{proof}
Recall that arithmetically equivalent fields have the same signature.  Denote the common signature of \(k_1\) and \(k_2\) by \((r,s)\) and denote the common signature of \(l_1\) and \(l_2\) by \((R,S)\).  Then the number of inner real factors in the above Lie group is \(R/2\), the number of outer real factors is \(S-s\) and the number of complex factors is \(s\).
\end{proof}

\begin{proposition}
  The Lie groups \(\prod_{v \mid \infty} \mathbf{G_1}({k_1}_v)\) and \(\prod_{v \mid \infty} \mathbf{G_2}({k_2}_v)\) have the same fundamental rank.
\end{proposition}

\begin{proof}
  By~\cite{Kammeyer-et-al:profinite-invariants}*{Proposition~2,9}, all inner real forms of the same Cartan-Killing type have the same fundamental rank and the same holds for the outer real forms.  The complex factors are isomorphic of course.
\end{proof}

\begin{corollary} \label{cor:euler-zero}
  We have \(\chi(\Gamma_1) = 0\) if and only if \(\chi(\Gamma_2) = 0\).
\end{corollary}

\begin{proof}
  As we saw in Section~\ref{section:preliminaries}, we have \(\chi(\Gamma_i) = 0\) if and only if the Lie group \(\prod_{v \mid \infty} \mathbf{G_i}({k_i}_v)\) has positive fundamental rank.
\end{proof}

\section{The sign of the Euler characteristic by Cartan type} \label{section:cartan-types}

In this section, we will go through all inner and outer Cartan--Killing types to see that the relevant sum formula for the local Brauer--Witt invariants implies the equality \(d(\mathbf{G_1}) = d(\mathbf{G_2})\).  It will turn out that in some types, this will be automatic because \(d(\mathbf{G_i})\) has to vanish anyway and so this section gives the proof of Theorem~\ref{thm:signs} along the way.  As we explained at the end of Section~\ref{section:preliminaries}, Corollary~\ref{cor:euler-zero} allows us to assume \(k_i\) is totally real in what follows and that \(\mathbf{G_i}\) has no real form of positive fundamental rank at any real place.  We will freely use the classification theory of simple algebraic groups by Tits indices as outlined in~\cite{Tits:classification}.  Recall that the fields \(l_1\) and \(l_2\) introduced in the proof of Proposition~\ref{prop:outer-forms-in-s} are arithmetically equivalent, so they have same degree over \(k_1\) and \(k_2\), respectively.  Therefore both \(\mathbf{G_1}\) and \(\mathbf{G_2}\) have the same inner or outer Cartan--Killing type, so we can proceed case by case through the types as follows.

\medskip
\noindent \emph{Type \(E_8\), \(F_4\), \(G_2\).}   If \(\mathbf{G_{0,i}}\) has type \(E_8\), \(F_4\), or \(G_2\), then \(Z(\mathbf{G_{0,i}})\) is trivial, and hence so are all local and global Brauer--Witt invariants.  The Dynkin diagrams of these types do not have symmetries so that for non-archimedean \(v\), the set \(H^1(k_v, \Aut \mathbf{G_{0,i}}) \cong H^1(k_v, \Ad \mathbf{G_{0,i}})\) is a singleton by Kneser's theorem~\cite{Kneser:galois}, meaning that there exists only the split form of these types over \({k_i}_v\).  In particular, all forms have even \({k_i}_v\)-rank.  We infer for instance from~\cite{Helgason:differential-geometry}*{Table~V, p.\,518} that the dimension of each symmetric spaces corresponding to a real form of one of these types is a multiple of four.  Therefore, we always have \(d(\mathbf{G_i})=0\).

\medskip
\noindent \emph{Type \({}^1 A_n\) for \(n \ge 2\), \({}^1 D_{2n+1}\) for \(n \ge 2\), \({}^1 E_6\).}  The inner real forms of these types are (up to isogeny) the groups \(\operatorname{SL}_{n+1}(\R)\), \(\operatorname{SU}^*(n+1)\) for \(n\) odd, \(\operatorname{SO}^0(p,q)\) with \(p\), \(q\) both odd and \(p+q \equiv 2 \mod 4\), \(E_{6(6)}\), and \(E_{6(-26)}\).  All these groups have positive fundamental rank.  Note in particular that the anisotropic real forms are not listed as they are outer forms in these Cartan--Killing types.  Hence there is nothing left to show in this case.

\medskip
\noindent \emph{Type \({}^2 E_6\).}  Since the inner real forms of type \({}^1 E_6\) have positive fundamental rank, only the outer real forms \(E_{6(2)}\), \(E_{6(-14)}\) (and the compact form \(E_{6(-78)}\)) of type \({}^2 E_6\) can occur at the real places of \(k\).  The corresponding symmetric spaces have dimension \(40\) and \(32\) (and \(0\)), so half the dimension is always even.  According to the Tits tables~\cite{Tits:classification}, at the non-archimedean places, either the split form (of rank six) the quasi-split form (of rank four) or a certain inner form of rank two can occur.  So we always have \(d(\mathbf{G_i}) = 0\).

\medskip
\noindent \emph{Type \({}^2 A_{2n}\) for \(n \ge 1\).} The inner real forms of this type have positive fundamental rank, so only the outer ones can occur at archimedean places.  These are (up to isogeny) the groups \(\operatorname{SU}(p,q)\) with \(p+q = 2n+1\).  Hence \(p\) and \(q\) cannot both be odd, so the dimension of the symmetric space \(2pq\) is always a multiple of four.  At any non-archimedean place, the form can either be the outer quasi-split form, which has \({k_i}_v\)-rank \(n\), or it is an inner form of \({k_i}_v\)-rank \(\frac{2n+1}{d}-1\) for some \(d \mid (2n+1)\).  So in the latter case, the \({k_i}_v\)-rank is even.  Thus \(d(\mathbf{G_i}) = 0\) if \(n\) is even.  If \(n\) is odd, then \(d(\mathbf{G_i})\) equals (mod~\(2\)) the number of non-archimedean places \(v \in S\) at which \(\mathbf{G_i}\) is the outer quasi-split form.  But this is just the number of non-archimedean places \(v \in S\) where \(\mathbf{G_{0,i}}\) remains outer, so \(d(\mathbf{G_1}) = d(\mathbf{G_2})\) by Proposition~\ref{prop:outer-forms-in-s}.

\medskip
\noindent \emph{Type \({}^2 A_{2n-1}\) for \(n \ge 2\).} Again, the inner real forms have positive fundamental rank, so only the outer forms \(\operatorname{SU}(p,q)\) with \(p+q = 2n\) can occur at real places.  This means in particular that each real place of \(k_i\) becomes complex in \(l_i\).  In other words \(l_i/k_i\) is a CM-field.  The dimension of the symmetric space \(2pq\) is \(0\) mod~\(4\), or \(2\) mod~\(4\), depending on whether \(p\) and \(q\) are both even or both odd.  At any non-archimedean place \(v\), the form can either be one of the two outer forms with Tits indices
\begin{center}
  \dynkin[scale=1.5,fold]A{oo.ooo.oo}, \quad \dynkin[scale=1.5,fold]A{oo.o*o.oo}
\end{center}
of \({k_i}_v\)-rank \(n\) and \(n-1\), respectively, or it is an inner form of \({k_i}_v\)-rank \(\frac{2n}{d}-1\) for some \(d \mid 2n\).  So each place possibly affects the outcome of \(d(\mathbf{G_i})\).

We first verify that the local Brauer--Witt invariant at a real place \(v\) of \(k_i\) determines half the deminsion of the associated symmetric space mod~\(2\).  By what we just said, the quasi-split group \(\mathbf{G_{0, i}}\) is isomorphic to the unique quasi-split real form \(\mathbf{SU}(n,n)\) at every real place.  Hence at each real place, the exact Galois cohomology sequence takes the form
\begin{align*}
  \mathbf{SU}(n,n)(\R) & \xrightarrow{\pi^0_\R} \mathbf{PSU}(n,n)(\R) \xrightarrow{\delta^0_\R} H^1(\R, Z) \rightarrow H^1(\R, \mathbf{SU}(n,n)) \xrightarrow{\pi^1_\R} \\
  & \rightarrow H^1(\R, \mathbf{PSU}(n,n)) \xrightarrow{\delta^1_\R} H^2(\R, Z)
\end{align*}
with \(Z = Z(\mathbf{SU}(n,n))\) denoting the center.  The Lie group \(\mathbf{SU}(n,n)(\R)\) is connected while \(\mathbf{PSU}(n,n)(\R)\) has two connected components.  Thus \(\delta^0_\R\) is surjective because \(H^1(\R,Z) \cong \{\pm 1\}\), as we infer from \cite{Platonov-Rapinchuk:algebraic-groups}*{(6.30), p.\,332}.  It follows that \(\pi^1_\R\) has trivial kernel but it is not injective.  In fact, the set \(H^1(\R, \mathbf{SU}(n,n))\) classifies isometry classes of nonsingular hermitian forms on the complex vector space \(\C^{2n}\) with discriminant one~\cite{Knus-et-al:involutions}*{p.\,403} (note the sign convention implemented there gives that the split hermitian form \((n,n)\) has discriminant one).  The map \(\pi^1_\R\) sends a hermitian form to the class of the corresponding unitary group, hence precisely the forms with signature \((p,q)\) and \((q,p)\) have the same image.  Since we saw above that \(H^2(\R, Z) \cong \Z / 2\), we conclude from exactness that \(\delta^1_\R\) sends the class of the real form \(\mathbf{SU}(p,q)\) to the discriminant of the underlying hermitian form which is \((-1)^p = (-1)^q = (-1)^{pq}\) because \(p+q=2n\) is even.  Thus at each real place, the Brauer--Witt invariant of a real form agrees mod~\(2\) with the middle dimension of the symmetric space of the real form.

Now let \(v\) be a non-archimedean place.  If \(v\) does not split in \(l\), there exist only two \({k_i}_v\)-forms whose \({k_i}_v\)-rank differs by one.  So we see that the Brauer--Witt invariant also determines the \({k_i}_v\)-rank mod~\(2\) at such places.

Finally, for a split non-archimedean place \(v\), it can be inferred for instance from~\cite{Pierce:associative}*{17.10, Corollary a.(iii), p.\,339} that the Brauer--Witt invariant \(\beta_i \in H^2({k_i}_v, Z) \cong \Br_{2n}({k_i}_v) \cong \Z / 2n\) is \(\frac{2n}{d}\) where \(d\) is the Schur index of the \({k_i}_v\)-division algebra \(D\) in the corresponding \({k_i}_v\)-form \(\mathbf{SL}_{r+1}(D)\) where \((r+1)d = 2n\).  Thus the Brauer--Witt invariant determines the \({k_i}_v\)-rank \(r = \frac{2n}{d}-1\) mod~\(2n\) and hence the Brauer--Witt invariant reduced to \(\Z / 2\) determines the \({k_i}_v\)-rank mod~\(2\).  In conclusion, we obtain \(d(\mathbf{G_1}) = d(\mathbf{G_2})\) from Proposition~\ref{prop:mod2sum}.

\medskip
\noindent \emph{Type \({}^2D_{2n+1}\)}.  In this case, we have \(Z(\mathbf{G_{0,i}}) \cong \mathbf{R}^{(1)}_{l_i/k_i}(\mu_4)\).  The inner real forms of this type have positive fundamental rank.  Hence, we can again assume that \(l_i/k_i\) is a CM-field, so Theorem~\ref{thm:ses-a2nm1} applies with \(n=2\).  The outer real forms are the groups \(\operatorname{SO}^0(p,q)\) with \(p+q = 4n+2\) and both \(p\) and \(q\) even and the form \(\operatorname{SO}^*(4n+2)\).  In the first case, the dimension of the associated symmetric space \(pq\) is thus a multiple of four, whereas in the second case, the dimension is \(4n^2 +2n\), so half the dimension is \(n\) mod~\(2\).  At a non-archimedean place \(v\) of \(k_i\) that is inert or ramified in \(l_i\), the group \(\mathbf{G_i}\) is either the quasi-split form or a certain non-quasi-split outer form:
\begin{center}
  \dynkin[scale=1.5, fold]D{ooo.oooo}, \quad \dynkin[scale=1.5, fold]D{*o*.o*oo}.
\end{center}
At a non-archimedean place \(v\) that splits in \(l\), the form has either of the following three Tits indices
\begin{center}
  \dynkin[scale=1.5]D{ooo.oooo}, \quad \dynkin[scale=1.5]D{ooo.oo**}, \quad \dynkin[scale=1.5]D{*o*.o***}.
\end{center}
If \(n\) is even, we thus see that the \({k_i}_v\)-rank is even for outer forms and odd for inner forms and half the dimension of the symmetric space is even.  Thus \(d(\mathbf{G_i})\) counts (mod 2) the number of non-archimedean places in \(S\) where \(\mathbf{G_{0,i}}\) becomes an inner type, so \(d(\mathbf{G_1}) = d(\mathbf{G_2})\) by Proposition~\ref{prop:outer-forms-in-s}.  If \(n\) is odd, then the two outer forms at a non-split \(v\) are distinguished by the Brauer--Witt invariant.  At a split place \(v\), the group \(\mathbf{A_0}({k_i}_v) \cong \Z / 2\) acts on \(H^2({k_i}_v, Z(\mathbf{G_{0,i}})) \cong \Z/4\) by inversion and the split form has Brauer--Witt invariant \(0\), the second form has Brauer--Witt invariant \(2\), and the third form has Brauer--Witt invariant \(\pm 1\).  This is immediate from the calculation in~\cite{Knus-et-al:involutions}*{Example~31.11, p.\,428}.  The same calculation shows that the real form \(\operatorname{SO}^*(4n+2)\) has non-trivial Brauer--Witt invariant while the real forms \(\operatorname{SO}^0(p,q)\) with \(p+q= 4n+2\) have trivial Brauer--Witt invariant.  Thus the mod~\(2\) reduction of the Brauer--Witt invariant determines the \({k_i}_v\)-rank mod~\(2\) at all non-archimedean places and half the dimension of the symmetric space at all real places.  So \(d(\mathbf{G_1}) = d(\mathbf{G_2})\) follows from Proposition~\ref{prop:mod2sum}.

\medskip
\noindent \emph{Type \({}^1D_{2n}\).} The real forms of this type are the forms \(\operatorname{SO}^0(p,q)\) with \(p+q=4n\) and \(p, q\) even and the form \(\operatorname{SO}^*(4n)\).  In the first case, the dimension of the symmetric space \(pq\) is a multiple of four whereas in the second case the dimension is \(4n^2-2n\) so that half the dimension equals \(n\) mod~\(2\).  At non-archimedean places, the possible forms have Tits indices
\begin{center}
  \dynkin[scale=1.5]D{ooo.ooooo}, \quad \dynkin[scale=1.5]D{ooo.ooo**}, \quad \dynkin[scale=1.5]D{*o*.o*o*o}.
\end{center}
In the first two cases, the \({k_i}_v\)-rank is even and in the last case, it is~\(n\).  So if \(n\) is even, then \(d(\mathbf{G_1}) = d(\mathbf{G_2}) = 0\).  If \(n\) is odd, we argue as follows.  The center in type \({}^1D_{2n}\) is given by \(Z = Z(\mathbf{G_{0,i}}) \cong \mu_2 \times \mu_2\).  At a non-archimedean place \(v\), the group \(\mathbf{A_0}({k_i}_v) \cong \Z/2\) acts on \(H^2({k_i}_v, Z) \cong \Br_2({k_i}_v) \times \Br_2({k_i}_v) \cong \Z / 2 \times \Z / 2\) by swapping the two factors.  Again, ~\cite{Knus-et-al:involutions}*{Example~31.11, p.\,428} shows that the Brauer--Witt invariant of the first Tits index is \((0,0)\), for the second one it is \((1,1)\) and for the third one it is \(\{(1,0),(0,1)\}\).  Similarly as before, the real form \(\operatorname{SO}^*(4n)\) has Brauer--Witt invariant \(\{(1,0),(0,1)\}\) while the other real forms have Brauer--Witt invariant \((0,0)\) or \((1,1)\).  Poitou--Tate duality now shows that the local Brauer--Witt invariants \((\beta_i)_v\) have to be annihilated by the diagonal image of the multiplication homomorphism \(\mu_2 \times \mu_2 \rightarrow \mu_2\) under \(H^0(k, Z')  \rightarrow \prod_v H^0(k_v, Z')\).  In other words the sum of all coordinates of all local Brauer--Witt invariants has to be zero.  Note that this condition is well-defined.  Since the Brauer--Witt invariant determines the \({k_i}_v\)-rank mod~\(2\) at non-archimedean places and the middle dimension of the symmetric space mod~\(2\) at real places, we conclude \(d(\mathbf{G_1}) = d(\mathbf{G_2})\).

\medskip
\noindent \emph{Type \({}^2D_{2n}\).} We have \(Z(\mathbf{G_{0,i}}) = \mathbf{R}_{l_i/k_i}(\mu_2)\).  The outer real forms of this type are the groups \(\operatorname{SO}^0(p,q)\) with \(p+q=4n\) and \(p,q\) odd.  We can discard them as they have positive fundamental rank.  So \(l_i\) is a totally real extension of \(k_i\).  The inner real types were described above.  At non-archimedean places, the inner types are listed above. There are two outer types at non-archimedean places with Tits indices
\begin{center}
  \dynkin[scale=1.5, fold]D{ooo.ooooo}, \quad \dynkin[scale=1.5, fold]D{*o*.o*o**}.
\end{center}
If \(n\) is even, we thus see that \(d(\mathbf{G_i})\) counts mod~\(2\) the non-archimedean places where \(\mathbf{G_{0,i}}\) remains outer, so \(d(\mathbf{G_1}) = d(\mathbf{G_2})\) by Proposition~\ref{prop:outer-forms-in-s}.  If \(n\) is odd, we see one more time that the local Brauer--Witt invariants determine the dimension of the symmetric space mod~\(2\) at real places and the \({k_i}_v\)-rank mod~\(2\) at non-archimedean places.  We conclude \(d(\mathbf{G_1}) = d(\mathbf{G_2})\) from Proposition~\ref{prop:mod2sum}.

\medskip
\noindent \emph{Type \({}^3D_4\).}  Clearly, such a form cannot become a \({}^2 D_4\)-form over any field extension.  So only inner types can occur at real places.  The inner real forms of type \(D_4\) are the groups \(\operatorname{SO}^0(4,4)\), \(\operatorname{SO}^0(6,2)\), \(\operatorname{SO}(8)\) with symmetric spaces of dimension \(16\), \(8\), and \(0\), respectively.  Over a non-archimedean place \(v\), the following Tits indices can occur:
\begin{center}
    \dynkin[scale=1.5]D{oooo}, \quad \dynkin[scale=1.5]D{oo**}, \quad \dynkin[scale=1.5,ply=3]D{oooo}.
\end{center}
The first has \({k_i}_v\)-rank four, the second and third have \({k_i}_v\)-rank two.  Thus always \(d(\mathbf{G_i}) = 0\).

\medskip
\noindent \emph{Type \({}^6D_4\).}  As opposed to the last case, this type can reduce to a type~\({}^2 D_4\)-form locally.  This means that a priori, the real forms \(\operatorname{SO}^0(5,3)\) and \(\operatorname{SO}^0(7,1)\) can occur but these have fundamental rank one, so they can be discarded.  In addition to the three Tits indices depicted above, also the following two indices can occur at a non-archimedean place \(v\):
\begin{center}
    \dynkin[scale=1.5, fold]D{oooo}, \quad \dynkin[scale=1.5, fold]D{*o**}.
\end{center}
These have \({k_i}_v\)-rank \(3\) and \(1\), respectively.  Hence \(d(\mathbf{G_i})\) equals mod~\(2\) the number of non-archimedean places \(v \in S\) at which \(\mathbf{G_i}\) is of type \({}^2D_4\).  We conclude \(d(\mathbf{G_1}) = d(\mathbf{G_2})\) from Proposition~\ref{prop:outer-forms-in-s} if \(k_i = \Q\).

\medskip
\noindent \emph{Type \(A_1\), \(B_n\) for \(n \ge 2\), \(C_n\) for \(n \ge 3\), \(E_7\).}  In that case, we have that \(Z(\mathbf{G_{0,i}}) = \mu_2\) and \(\mathbf{A_0}\) is trivial.  Accordingly, \(H^2(k_i, Z(\mathbf{G_{0,i}})) \cong \Br_2(k_i)\) is the subgroup of the Brauer group of elements of order one or two.  As we already mentioned, Poitou--Tate duality thus reduces to the short exact sequence
\begin{equation} \label{eq:abhn} 1 \rightarrow \operatorname{Br}_2({k_i}) \rightarrow \bigoplus_v \operatorname{Br}_2({k_i}_v) \rightarrow \Z / 2 \rightarrow 1 \end{equation}
which also follows from the Albert--Brauer--Hasse--Noether theorem.  So the local Brauer--Witt invariants \(({\beta_1}_v)\) and \(({\beta_2}_v)\) in \(\prod_{v \in S} H^2({k_i}_v, Z(\mathbf{G_{0,i}}))\) have the same coordinate sum in \(\Z / 2\).  We distinguish the cases even further.

\medskip
\noindent \emph{Type \(A_1\).}  If \(\mathbf{G_{0,i}}\) has type \(A_1\), then for all places \(v\), archimedean or not, each coordinate \({\beta_i}_v \in H^2({k_i}_v, Z(\mathbf{G_{0,i}})) \cong \Z/2\) represents either the unique split or the unique ramified quaternion algebra over \({k_i}_v\) according to whether \({\beta_i}_v\) is trivial or non-trivial.  Correspondingly, \(\mathbf{G_i}\) is the unique split or the unique anisotropic form over \({k_i}_v\).  In particular, the \({k_i}_v\)-rank of \(\mathbf{G_i}\) differs by one in these two cases.  If \(v\) is real, we have moreover \(\operatorname{rank}_\R \mathbf{G_i} = \operatorname{rank}_\R \mathfrak{g}_i = \frac{\dim \mathfrak{p}_i}{2}\) where \(\mathfrak{g}_i \cong \mathfrak{k}_i \oplus \mathfrak{p}_i\) is a Cartan decomposition of the Lie algebra \(\mathfrak{g}_i\) of \(\mathbf{G_i}\).  Thus \(d(\mathbf{G_1}) = d(\mathbf{G_2})\) because the local Brauer--Witt invariants for \(v \in S_i\) have the same sum mod~\(2\) for \(\mathbf{G_1}\) and \(\mathbf{G_2}\).

\medskip
\noindent \emph{Type \(B_n\).}  If \(\mathbf{G_{0,i}}\) has type \(B_n\) and \(v\) is non-archimedean, then similarly, \({\beta_i}_v \in H^2({k_i}_v, Z(\mathbf{G_{0,i}})) \cong \Z/2\) corresponds to the unique split or the unique non-split type \(B_n\) form over \({k_i}_v\) which has Tits index
\begin{center}
    \dynkin[scale=2]B{oo.oo*}.
  \end{center}
  So again, the \({k_i}_v\)-rank differs by one in these two cases.  Let us now specify that \(\mathbf{G_{0,i}} = \operatorname{Spin}(q)\) where \(q\) denotes the quadratic form \(q = \langle 1 \rangle ^{n+1} \oplus \langle -1 \rangle^n\).  With this model for \(\mathbf{G_{0,i}}\), we can make the boundary map \(\delta^1 \colon H^1(k_i, \Ad \mathbf{G_{0,i}}) \rightarrow H^2(k_i, Z(\mathbf{G_{0,i}}))\) explicit as \(\delta^1(\alpha) = w_2(q_\alpha) - w_2(q)\) where \(w_2\) denotes the \(\Br_2(k_i)\)-valued Hasse--Witt invariant~\cite{Serre:galois-cohomology}*{Example~3.2.b), p.\,141}.  Here \(q_\alpha\) is the quadratic form obtained from \(q\) by twisting with \(\alpha\), noting that \(\Ad \mathbf{G_{0,i}} = \mathbf{SO}(q)\) and hence \(H^1(k_i, \Ad \mathbf{G_{0,i}})\) classifies quadratic forms over \(k_i\) of rank \(2n+1\) with discriminant \((-1)^n\).  So if \(v\) is real and \(\alpha \in H^1({k_i}_v, \Ad \mathbf{G_{0,i}})\) is such that \(q_\alpha \cong \langle 1 \rangle^{2n+1-p} \oplus \langle -1 \rangle^p\) with \(p \equiv n \mod 2\), then \(\delta^1_v(\alpha)\) is trivial if and only if \(\frac{p(p-1) - n(n-1)}{2}\) is even and we have \(p(p-1) - n(n-1) \equiv p -n \mod 4\).  The symmetric space associated with the Lie group \(\mathbf{SO}(q_\alpha)(\R)\) has dimension \((2n+1-p)p\) and
  \[ (2n+1-p)p \equiv p \mod 4 \]
    if \(n\) and \(p\) are even whereas
    \[ (2n+1-p)p \equiv p + 1 \mod 4 \]
    if \(n\) and \(p\) are odd.  So in any case, the integers \(\frac{p(p-1) - n(n-1)}{2}\) and \(\frac{(2n+1-p)p}{2}\) have constant difference mod~\(2\).  We conclude \(d(\mathbf{G_1}) = d(\mathbf{G_2})\).

\medskip
\noindent \emph{Type \(C_n\).}  If \(\mathbf{G_{0,i}}\) has type \(C_n\) and \(v\) is non-archimedean, then \(\beta^i_v \in H^2(k_v, Z(\mathbf{G_{0,i}})) \cong \Z/2\) informs us on whether the corresponding \({k_i}_v\)-form is split or the unique non-split form with Tits index
\begin{center}
    \dynkin[scale=2]C{*o*o.*o} \quad or \quad \dynkin[scale=2]C{*o*o.o*}
\end{center}
depending on whether \(n\) is even or odd.  The non-split real forms of type \(C_n\) are the groups \(\operatorname{Sp}(p,q)\) and the corresponding symmetric space has dimension \(4pq\) so that half the dimension is always even.  The split real form \(\operatorname{Sp}_n(\R)\) has a symmetric space of dimension \(n(n+1)\).   We now treat the cases \(n \equiv 0, 1, 2, 3 \mod 4\) separately. If \(n \equiv 0\) mod~\(4\), we see that always \(d(\mathbf{G_i}) = 0\).  If \(n \equiv 1\) mod~\(4\), then \(\frac{n(n+1)}{2}\) is odd.  For non-archimedean \(v\), the \({k_i}_v\)-rank of the split form is odd while the \({k_i}_v\)-rank of the non-split form is even.  In type \(C_n\), the local Brauer--Witt invariant is trivial if and only if the corresponding group splits, both for archimedean and non-archimedean places, as can be inferred for instance from~\cite{Adams-Taibi:galois-cohomology}*{Section~10} and \cite{Platonov-Rapinchuk:algebraic-groups}*{Corollary to Theorem~6.20, p.\,326}.  Since the number of places in \(S\) where \(\mathbf{G_i}\) is split (or non-split) is thus equal mod~\(2\) for \(i = 1, 2\), we have \(d(\mathbf{G_1}) = d(\mathbf{G_2})\).  If \(n \equiv 2\) mod~\(4\), then again \(\frac{n(n+1)}{2}\) is odd.  For non-archimedean \(v\), the \({k_i}_v\)-rank of the split form is even and the \({k_i}_v\)-rank of the non-split form is odd.  Thus mod~\(2\), the invariant \(d(\mathbf{G_i})\) is equal to the sum of \([k_i:\Q]\) and the number of places in \(S_i\) where \(\mathbf{G_i}\) is non-split.  Again we conclude \(d(\mathbf{G_1}) = d(\mathbf{G_2})\).  Finally, if \(n \equiv 3\) mod~\(4\), then \(\frac{n(n+1)}{2}\) is even and for non-archimedean \(v\), both the split and non-split form have odd \({k_i}_v\)-rank.  Thus \(d(\mathbf{G_i})\) is always equal to the number of non-archimedean places in \(S_i\), so \(d(\mathbf{G_1}) = d(\mathbf{G_2})\) by Proposition~\ref{prop:outer-forms-in-s} because \(k_1\) and \(k_2\) have the same signature, hence the same number of archimedean places.  This completes the discussion of type \(C_n\).

\medskip
\noindent \emph{Type \(E_7\).}  If \(\mathbf{G_{0,i}}\) has type \(E_7\) and \(v\) is non-archimedean, then \({\beta_i}_v \in H^2({k_i}_v, Z(\mathbf{G_{0,i}}))\) is nontrivial if and only if the corresponding \({k_i}_v\)-form has Tits index
\begin{center}
    \dynkin[scale=2] E{o*oo*o*}.
  \end{center}
  Hence the \({k_i}_v\)-form has odd rank if the Brauer--Witt invariant is trivial and has even rank if the Brauer--Witt invariant is non-trivial.  If on the other hand \(v\) is real, then it is for instance explained in~\cite{Echtler-Kammeyer:bounded}*{Proposition~4} that \(\beta^i_v \in H^2(k_v, Z(\mathbf{G_{0,i}}))\) is trivial if the \({k_i}_v\)-form is split or has Tits index
\begin{center}
  \dynkin[scale=2] E{o****oo}
\end{center}
and is nontrivial if the \({k_i}_v\)-form is anisotropic or has the same index as the non-split non-archimedean form above.  In the former case, the symmetric space has dimension \(70\) or \(54\), so half the dimension is odd.  In the latter case, the symmetric space has dimension \(0\) or \(64\), so half the dimension is even.  Hence \(d(\mathbf{G_1}) = d(\mathbf{G_2})\) follows again from the equality mod~\(2\) of the sum of local Brauer invariants for \(v \in S\).

\section{Full triality type} \label{section:full-triality}

In this section, we discuss how the equality \(n_1 = n_2\) in Proposition~\ref{prop:outer-forms-in-s} might fail if \(\mathbf{G_i}\) has type \({}^6 D_4\) over a general number field \(k_i\).  As a consequence, the \(S_i\)-arithmetic subgroups of \(\mathbf{G_i}\) would be profinitely commensurable but have opposite sign of the Euler characteristic.

\medskip
Let \(k\) be a totally real number field of degree \(d \ge 6\) and let \(p\) be a rational prime which is unramified in \(k\) and such that at least one place \(v_0\) of \(k\) over \(p\) has inertia degree three and at least three places \(v_1\), \(v_2\), \(v_3\) of \(k\) over \(p\) have inertia degree one.  For \(i=1,2\), let \(l_i/k\) be a totally real Galois extension with Galois group isomorphic to \(S_3\).  Assume that \(p\) remains unramified in \(l_1\) and \(l_2\).  Suppose moreover that \(v_0\) decomposes into three primes in \(l_1\) and each of \(v_1\), \(v_2\), \(v_3\) decomposes into two primes in \(l_1\).  In contrast, suppose that \(v_0\) splits into six primes in \(l_2\) while \(v_1\), \(v_2\), and \(v_3\) are inert in \(l_2\).  Finally assume that all other \(v \mid p\) decompose equally in \(l_1\) and \(l_2\).  Note that then \(p\) has the same unordered tuple of inertia degrees in \(l_1\) and \(l_2\) which leaves the possibility that \(l_1\) and \(l_2\) are arithmetically equivalent.  The field extension \(l_i/k\) defines a unique conjugacy class of surjective homomorphisms \(\operatorname{Gal}(k) \rightarrow \operatorname{Sym} \Delta\) where \(\Delta\) is the Dynkin diagram of a \(k\)-split group \(\mathbf{G}\) of type \(D_4\).  In other words, we obtain unique elements \(\beta_i \in H^1(k, \operatorname{Sym} \Delta)\) that determine quasi-split \(k\)-groups \(\mathbf{G_1}\) and \(\mathbf{G_2}\) of type \({}^6 D_4\).  Let \(S\) be the set of all places of \(k\) dividing either \(\infty\) or \(p\) and let \(S_i\) be the set of all places of \(l_i\) lying over places in \(S\).  Assuming \(\mathbb{A}_{l_1, S_1} \cong \mathbb{A}_{l_2, S_2}\) over some automorphism of \(\mathbb{A}_{k, S}\), the \(k\)-groups \(\mathbf{G_1}\) and \(\mathbf{G_2}\) are \(S\)-adelically isomorphic but \(n_1 = n_2 + 1\).  

\begin{proposition}
  Suppose there exist number fields \(l_1, l_2 \,/\, k\) and sets of places \(S, S_1, S_2\) as above.  Let \(\Gamma_i\) be an \(S\)-arithmetic subgroup of the \(k\)-quasi-split group \(\mathbf{G_i}\) of type \({}^6 D_4\) defined by \(l_i\).  Then \(\Gamma_1\) is profinitely commensurable with \(\Gamma_2\) but \(\operatorname{sgn} \chi(\Gamma_1) \neq \operatorname{sgn} \chi(\Gamma_2)\).
\end{proposition}

\begin{proof}
  The condition that \(l_1, l_2 \,/\, k\) are totally real gives that the corresponding quasi-split groups \(\mathbf{G_1}\) and \(\mathbf{G_2}\) become the real Lie group \(\operatorname{SO}^0(4,4)\) with fundamental rank zero and symmetric space of dimension~16 at all infinite places of \(k\).  By the description in the previous section, \(d(\mathbf{G_i})\) equals mod~\(2\) the number of non-archimedean places \(v \in S\) such that \(\mathbf{G_i}\) has type \({}^2 D_4\) at \(v\) or, equivalently, such that \(v\) splits into precisely three places in \(l_i\) (each defining a quadratic extension of~\(k_v\)).  Thus indeed \(d(\mathbf{G_1}) \neq d(\mathbf{G_2})\) hence \(\operatorname{sgn} \chi(\Gamma_1) \neq \operatorname{sgn} \chi(\Gamma_2)\).  The isomorphism \(\Phi \colon \mathbb{A}_{l_1, S_1} \xrightarrow{\cong} \mathbb{A}_{l_2, S_2}\) over the automorphism \(\phi \colon \mathbb{A}_{k, S} \xrightarrow{\cong} \mathbb{A}_{k, S}\) induces an isomorphism from \(\mathbf{G_1}\) to \(\mathbf{G_2}\) over \(\phi\) which shows that \(\Gamma_1\) and \(\Gamma_2\) have commensurable congruence completions.  Finally, \(\mathbf{G_i}\) has \(k\)-rank two and thus is known to satisfy the congruence subgroup property.  So \(\Gamma_1\) and \(\Gamma_2\) have commensurable profinite completions.
\end{proof}

  In order to try and find such number fields \(l_1/k\) and \(l_2/k\), one can translate the situation into group theoretic terms via Galois correspondence.  Let \(G \le S_{6d}\) be a transitive permutation group with \(d \ge 6\) and let \(H\) be the stabilizer subgroup of any of the \(6d\) permuted points.  Suppose that some conjugacy class of \(G\) has cycle type containing three cycles of length six and six cycles of length three.  Suppose moreover that \(H\) has a non-trivial \emph{almost conjugate subgroup} \(H'\) in \(G\), meaning a non-conjugate subgroup that intersects each conjugacy class of \(G\) in the same number of elements as \(H\) does.  Assume moreover that there exists a subgroup \(G \ge U \trianglerighteq H, H'\) such that \(U/H\) and \(U/H'\) are isomorphic to \(S_3\).  

  If \(G\) is realizable as Galois group of a totally real Galois extension of \(\Q\), then such a realization defines a totally real number field \(k\) of degree \(d\) over \(\Q\) as the fixed field of \(U\) whereas \(H\) and \(H'\) correspond to the totally real field extensions \(l_1/k\) and \(l_2/k\).  The almost conjugacy of \(H\) and \(H'\) translates into \(l_1\) and \(l_2\) being arithmetically equivalent~\cite{Klingen:similarities}*{Theorem~III.1.3, p.\,77}.  The condition on the cycle type of some conjugacy class in \(G\) ensures by Chebotarev's density theorem that infinitely many primes \(p\) have the decomposition behavior in \(l_1\) and \(l_2\) that would result from the particular decomposition prescribed as above.  One could then check whether the desired decomposition actually occurs over some \(p\) and whether \(\mathbb{A}_{l_1, S_1} \cong \mathbb{A}_{l_2, S_2}\) over an automorphism of \(\mathbb{A}_{k,S}\).

  \medskip
  With \textsc{Magma}, we have checked that for \(d=6\) there exists precisely one and for \(d=7\), there exist precisely two transitive permutation groups \(G \le S_{6d}\) with subgroups \(G \ge U \trianglerighteq H\) such that \(U/H \cong S_3\) and such that \(H\) has a non-trivial almost conjugate subgroup.  Unfortunately, none of these has a conjugacy class with the required cycle type.  The standard \textsc{Magma} distribution only comes with a database of transitive permutation groups up to degree \(47\), so that we could not check the next case \(d=8\).  We remark, however, that recently all 195\,826\,352 conjugacy classes of transitive permutation groups of degree 48 have been enumerated in \textsc{Magma}~\cite{Holt-Royle-Tracey:degree48}.  The database requires more than 30 GB of disk space.  We have not checked it for the groups we are looking for as this would conceivably take an enormous amount of computation time.

  As the number of permutation groups with non-trivial almost conjugate subgroups grows with \(d\), we see no reason why the alleged fields \(l_1, l_2 \,/\, k\) would not exist.  Moreover, the ad hoc example of the decomposition behavior of \(p\) in \(k\) and \(l_i\) is only one of many possibilities to potentially obtain \(n_1 \neq n_2\).  Using ramified primes \(p\), there are even more ways in which the phenomenon \(n_1 \neq n_2\) might arise despite \(\mathbb{A}_{l_1, S_1} \cong \mathbb{A}_{l_2, S_2}\).  So at the time of writing, we have no indication that the non-triality condition in Theorem~\ref{thm:main-theorem} could be removed.

\begin{bibdiv}[References]
  \begin{biblist}

    \bib{Adams-Taibi:galois-cohomology}{article}{
   author={Adams, Jeffrey},
   author={Ta\"{\i}bi, Olivier},
   title={Galois and Cartan cohomology of real groups},
   journal={Duke Math. J.},
   volume={167},
   date={2018},
   number={6},
   pages={1057--1097},
   issn={0012-7094},
   review={\MR{3786301}},
 }
 
\bib{Aka:property-t}{article}{
   author={Aka, Menny},
   title={Profinite completions and Kazhdan's property (T)},
   journal={Groups Geom. Dyn.},
   volume={6},
   date={2012},
   number={2},
   pages={221--229},
   issn={1661-7207},
   review={\MR{2914858}},
 }
 
 \bib{Bridson-Conder-Reid:fuchsian}{article}{
   author={Bridson, M. R.},
   author={Conder, M. D. E.},
   author={Reid, A. W.},
   title={Determining Fuchsian groups by their finite quotients},
   journal={Israel J. Math.},
   volume={214},
   date={2016},
   number={1},
   pages={1--41},
   issn={0021-2172},
   review={\MR{3540604}},
 }

     \bib{Cheetham-West-et-al:property-fa}{article}{
       author={Cheetham-West, Tamunonye},
       author={Lubotzky, Alexander},
       author={Reid, Alan W.},
    author={Spitler, R.},
    title={Property FA is not a profinite property},
   date={2022},
   review={\arXiv{2212.08207}},
 }

\bib{Echtler-Kammeyer:bounded}{article}{
    author={Echtler, D.},
    author={Kammeyer, H.},
    title={Bounded cohomology is not a profinite invariant},
    journal={Canad. Math. Bull.},
    note={FirstView},
    pages={1--12},
    date={2023},
   review={\\ \url{https://doi.org/10.4153/s0008439523000826}},
 }

 \bib{Harari:galois-cohomology}{book}{
   author={Harari, David},
   title={Galois cohomology and class field theory},
   series={Universitext},
   note={Translated from the 2017 French original by Andrei Yafaev},
   publisher={Springer, Cham},
   date={2020},
   pages={xiv+338},
   isbn={978-3-030-43901-9},
   isbn={978-3-030-43900-2},
   review={\MR{4174395}},
 }
 
    \bib{Harder:bericht}{article}{
   author={Harder, G\"{u}nter},
   title={Bericht \"{u}ber neuere Resultate der Galoiskohomologie
   halbeinfacher Gruppen},
   language={German},
   journal={Jber. Deutsch. Math.-Verein.},
   volume={70},
   date={1967/68},
   pages={182--216},
   issn={0012-0456},
   review={\MR{0242838}},
}

\bib{Helgason:differential-geometry}{book}{
   author={Helgason, Sigurdur},
   title={Differential geometry, Lie groups, and symmetric spaces},
   series={Graduate Studies in Mathematics},
   volume={34},
   note={Corrected reprint of the 1978 original},
   publisher={American Mathematical Society, Providence, RI},
   date={2001},
   pages={xxvi+641},
   isbn={0-8218-2848-7},
   review={\MR{1834454}},
 }

 \bib{Holt-Royle-Tracey:degree48}{article}{
   author={Holt, Derek},
   author={Royle, Gordon},
   author={Tracey, Gareth},
   title={The transitive groups of degree 48 and some applications},
   journal={J. Algebra},
   volume={607},
   date={2022},
   pages={372--386},
   issn={0021-8693},
   review={\MR{4441332}},
 }
 
 \bib{Jaikin-Lubotzky:grothendieck-pairs}{article}{
       author={Jaikin-Zapirain, Andrei},
       author={Lubotzky, Alexander},
    title={Some remarks on Grothendieck pairs},
   date={2024},
   review={\arXiv{2401.02229}},
 }
 
 \bib{Kammeyer:l2-invariants}{book}{
   author={Kammeyer, Holger},
   title={Introduction to $\ell^2$-invariants},
   series={Lecture Notes in Mathematics},
   volume={2247},
   publisher={Springer, Cham},
   date={2019},
   pages={viii+181},
   isbn={978-3-030-28296-7},
   isbn={978-3-030-28297-4},
   review={\MR{3971279}},
}

\bib{Kammeyer-Kionke:adelic-superrigidity}{article}{
   author={Kammeyer, Holger},
   author={Kionke, Steffen},
   title={Adelic superrigidity and profinitely solitary lattices},
   journal={Pacific J. Math.},
   volume={313},
   date={2021},
   number={1},
   pages={137--158},
   issn={0030-8730},
   review={\MR{4313430}},
 }
 
\bib{KKK:volume}{article}{
   author={Kammeyer, Holger},
   author={Kionke, Steffen},
   author={K\"ohl, Ralf},
   title={Profiniteness of higher rank volume},
   date={2024},
   journal={e-print},
   note={\arXiv{2412.13056}},
 }
 
\bib{Kammeyer-et-al:profinite-invariants}{article}{
   author={Kammeyer, Holger},
   author={Kionke, Steffen},
   author={Raimbault, Jean},
   author={Sauer, Roman},
   title={Profinite invariants of arithmetic groups},
   journal={Forum Math. Sigma},
   volume={8},
   date={2020},
   pages={Paper No. e54, 22},
   review={\MR{4176758}},
 }
 
\bib{Kammeyer-Sauer:spinor}{article}{
   author={Kammeyer, Holger},
   author={Sauer, Roman},
   title={$S$-arithmetic spinor groups with the same finite quotients and
   distinct $\ell^2$-cohomology},
   journal={Groups Geom. Dyn.},
   volume={14},
   date={2020},
   number={3},
   pages={857--869},
   issn={1661-7207},
   review={\MR{4167024}},
 }
 
 \bib{Klingen:similarities}{book}{
   author={Klingen, Norbert},
   title={Arithmetical similarities},
   series={Oxford Mathematical Monographs},
   note={Prime decomposition and finite group theory;
   Oxford Science Publications},
   publisher={The Clarendon Press, Oxford University Press, New York},
   date={1998},
   pages={x+275},
   isbn={0-19-853598-8},
   review={\MR{1638821}},
 }
 
 \bib{Kneser:galois}{article}{
   author={Kneser, Martin},
   title={Galois-Kohomologie halbeinfacher algebraischer Gruppen \"{u}ber
   ${\germ p}$-adischen K\"{o}rpern. II},
   language={German},
   journal={Math. Z.},
   volume={89},
   date={1965},
   pages={250--272},
   issn={0025-5874},
   review={\MR{0188219}},
 }

 \bib{Knus-et-al:involutions}{book}{
   author={Knus, Max-Albert},
   author={Merkurjev, Alexander},
   author={Rost, Markus},
   author={Tignol, Jean-Pierre},
   title={The book of involutions},
   series={American Mathematical Society Colloquium Publications},
   volume={44},
   note={With a preface in French by J. Tits},
   publisher={American Mathematical Society, Providence, RI},
   date={1998},
   pages={xxii+593},
   isbn={0-8218-0904-0},
   review={\MR{1632779}},
 }
 
 \bib{Lorenz:algebra2}{book}{
   author={Lorenz, Falko},
   title={Einf\"{u}hrung in die Algebra. Teil II},
   language={German},
   publisher={Bibliographisches Institut, Mannheim},
   date={1990},
   pages={x+386},
   isbn={3-411-14801-2},
   review={\MR{1115667}},
 }
 
\bib{Lueck:l2-invariants}{book}{
   author={L\"{u}ck, Wolfgang},
   title={$L^2$-invariants: theory and applications to geometry and
   $K$-theory},
   series={Ergebnisse der Mathematik und ihrer Grenzgebiete. 3. Folge. A
   Series of Modern Surveys in Mathematics},
   volume={44},
   publisher={Springer-Verlag, Berlin},
   date={2002},
   pages={xvi+595},
   isbn={3-540-43566-2},
   review={\MR{1926649}},
 }

 \bib{Perlis:equation}{article}{
   author={Perlis, Robert},
   title={On the equation $\zeta \sb{K}(s)=\zeta \sb{K'}(s)$},
   journal={J. Number Theory},
   volume={9},
   date={1977},
   number={3},
   pages={342--360},
   issn={0022-314X},
   review={\MR{0447188}},
 }

 \bib{Pierce:associative}{book}{
   author={Pierce, Richard S.},
   title={Associative algebras},
   series={Studies in the History of Modern Science},
   volume={9},
   note={Graduate Texts in Mathematics, 88},
   publisher={Springer-Verlag, New York-Berlin},
   date={1982},
   pages={xii+436},
   isbn={0-387-90693-2},
   review={\MR{0674652}},
}
 
 \bib{Platonov-Rapinchuk:algebraic-groups}{book}{
   author={Platonov, Vladimir},
   author={Rapinchuk, Andrei},
   title={Algebraic groups and number theory},
   series={Pure and Applied Mathematics},
   volume={139},
   note={Translated from the 1991 Russian original by Rachel Rowen},
   publisher={Academic Press, Inc., Boston, MA},
   date={1994},
   pages={xii+614},
   isbn={0-12-558180-7},
   review={\MR{1278263}},
 }

  \bib{Prasad-Rapinchuk:developments}{article}{
       author={Prasad, Gopal},
       author={Rapinchuk, Andrei},
    title={Developments on the congruence subgroup problem after the work of Bass, Milnor and Serre},
   date={2008},
   review={\arXiv{0809.1622}},
 }
 
 \bib{Prasad-Rapinchuk:existence}{article}{
   author={Prasad, Gopal},
   author={Rapinchuk, Andrei S.},
   title={On the existence of isotropic forms of semi-simple algebraic
   groups over number fields with prescribed local behavior},
   journal={Adv. Math.},
   volume={207},
   date={2006},
   number={2},
   pages={646--660},
   issn={0001-8708},
   review={\MR{2271021}},
 }

 \bib{Serre:cohomologie-discrets}{article}{
   author={Serre, Jean-Pierre},
   title={Cohomologie des groupes discrets},
   language={French},
   conference={
      title={Prospects in mathematics},
      address={Proc. Sympos., Princeton Univ., Princeton, N.J.},
      date={1970},
   },
   book={
      series={Ann. of Math. Stud.},
      volume={No. 70},
      publisher={Princeton Univ. Press, Princeton, NJ},
   },
   date={1971},
   pages={77--169},
   review={\MR{0385006}},
 }
 
 \bib{Serre:galois-cohomology}{book}{
   author={Serre, Jean-Pierre},
   title={Galois cohomology},
   series={Springer Monographs in Mathematics},
   edition={English edition},
   note={Translated from the French by Patrick Ion and revised by the
   author},
   publisher={Springer-Verlag, Berlin},
   date={2002},
   pages={x+210},
   isbn={3-540-42192-0},
   review={\MR{1867431}},
 }

 \bib{Tits:classification}{article}{
   author={Tits, J.},
   title={Classification of algebraic semisimple groups},
   conference={
      title={Algebraic Groups and Discontinuous Subgroups},
      address={Proc. Sympos. Pure Math., Boulder, Colo.},
      date={1965},
   },
   book={
      publisher={Amer. Math. Soc., Providence, RI},
   },
   date={1966},
   pages={33--62},
   review={\MR{0224710}},
 }
 
\end{biblist}
\end{bibdiv}

\end{document}